\begin{document}

\title{On the images of Galois representations attached to low weight Siegel modular forms}
\date{}
\author{Ariel Weiss}
\address{Ariel Weiss, Department of Mathematics, Ben-Gurion University of the Negev, Be'er Sheva 8410501, Israel.\vspace*{-3pt}}
\email{weiss.ariel@mail.huji.ac.il}
	\subjclass[2020]{Primary: 11F80, Secondary: 11F46, 11S37}
\keywords{Siegel modular forms, Images of Galois representations, Irreducibility Conjecture}

\begin{abstract}
		Let $\pi$ be a cuspidal automorphic representation of $\Gf(\AQ)$, whose archimedean component is a holomorphic discrete series or limit of discrete series representation. If $\pi$ is not CAP or endoscopic, then we show that its associated $\l$-adic Galois representations are irreducible and crystalline for $100\%$ of primes $\l$. If, moreover, $\pi$ is neither an automorphic induction nor a symmetric cube lift, then we show that, for $100\%$ of primes $\l$, the image of its mod $\l$ Galois representation contains $\Sp_4(\Fl)$.
\end{abstract}

\maketitle
\section{Introduction}

Under the Langlands correspondence, where automorphic representations of $\GL_n$ should correspond to $n$-dimensional Galois representations, \emph{cuspidal} automorphic representations should correspond to \emph{irreducible} Galois representations. More generally, one expects that the image of an automorphic Galois representation should be as large as possible, unless there is an automorphic reason for it to be small.

In this paper, we study the images of Galois representations attached to low weight, genus $2$ Siegel modular forms. These automorphic forms are the genus $2$ analogues of weight $1$ modular forms, and are of particular interest due to their conjectural relationship with abelian surfaces. Our main result is the following theorem:

\begin{theorem}\label{mainthm}
Let $\pi$ be a cuspidal automorphic representation of $\Gf(\AQ)$ such that $\pi_\infty$ is a holomorphic discrete series or limit of discrete series representation. Assume that the weak functorial lift of $\pi$ to $\GL_4$ exists and is cuspidal. Let $E$ be the coefficient field of $\pi$. For each prime $\lambda$ of $E$, of residue characteristic $\l$, let
\[\rho_\lambda\:\Ga\Q\to\Gf(\overline{E}_\lambda)\]
be the $\lambda$-adic Galois representation associated to $\pi$. Then:
\begin{enumerate}
\item If $\rho_\lambda|_{\Ql}$ is de Rham, and if $\l\ge 5$, then $\rho_\lambda$ is irreducible.
\item $\rho_\lambda|_{\Ql}$ is crystalline for all $\lambda\mid\l$ for a set of primes $\l$ of Dirichlet density $1$.
\end{enumerate}
In particular, $\rho_\lambda$ is irreducible for all $\lambda\mid\l$ for a set of primes $\l$ of Dirichlet density $1$.
\end{theorem}

The automorphic representations $\pi$ in the statement of the theorem are exactly those that arise from classical genus $2$ vector-valued Siegel modular forms. The assumption that the functorial lift is cuspidal amounts to demanding that $\pi$ arises from a Siegel modular form that is not CAP or endoscopic; in these cases, the associated Galois representation is known to be reducible. In general, the existence of this lift follows from Arthur's classification \cite{Arthur2013}; see \Cref{arthur} for further discussion.

Beyond irreducibility, we also prove a big image theorem for the images of the mod $\lambda$ representations attached to $\pi$:

\begin{theorem}\label{residual-irred-intro}
	Let $\pi$ be a cuspidal automorphic representation of $\Gf(\AQ)$ such that $\pi_\infty$ is a holomorphic discrete series or limit of discrete series representation. Assume that the weak functorial lift of $\pi$ to $\GL_4$ exists and is cuspidal. Let $E$ be the coefficient field of $\pi$. For each prime $\lambda$ of $E$, of residue characteristic $\l$, let $\F_\lambda = \O_E/\lambda$ and let
	\[\orho_{\lambda}\: \Ga \Q \to \Gf(\F_\lambda)\]
	be the mod $\lambda$ Galois representation associated to $\pi$. Let $\LL$ be the set of primes $\lambda$ of $E$ for which $\rho_\lambda|_{\Ql}$ is crystalline. Then:
	\begin{enumerate}
		\item For all but finitely many primes $\lambda\in \LL$, $\orho_{\lambda}$ is irreducible.
		\item If $\pi$ is neither an automorphic induction nor a symmetric cube lift, then, for all but finitely many primes $\lambda\in \LL$, the image of $\orho_{\lambda}$ contains $\Sp_4(\Fl)$.
	\end{enumerate}
\end{theorem}

The corresponding results for elliptic modular forms were proven by Deligne--Serre, Ribet and Momose \cites{Deligne-Serre, Ribet77, momose,Ribet85}. For high weight Siegel modular forms, irreducibility for all but finitely many primes follows from the work of Ramakrishnan \cite{Ramakrishnan}, while the analogue of \Cref{residual-irred-intro}  follows from \cite[Prop.\ 5.3.2]{BLGGT}  and from the work of Dieulefait--Zenteno \cite{DZ}, but only for a set of primes $\lambda$ of residual Dirichlet density $1$. Conjecturally, $\rho_\lambda$ should be irreducible for all primes $\lambda$, and $\orho_\lambda$ should be irreducible for all but finitely many primes $\lambda$. Since, in the high weight case, the set $\LL$ contains all but finitely many primes, \Cref{residual-irred-intro} gives an improvement on existing results even in the cohomological case.

These previous results depend crucially on the facts that the associated Galois representations are geometric, satisfy the Ramanujan conjectures and, in the high weight Siegel modular form case, are Hodge--Tate regular. All other recent results proving the irreducibility of automorphic Galois representations rely on these inputs \cite{Ramakrishnan, BLGGT,CG, patrikis-taylor, Xia}. 

The novelty of this paper is that we prove a big image theorem in a situation where these key inputs are not available. In the case of low weight Siegel modular forms, the Hodge--Tate--Sen weights of $\rho_\lambda$ are irregular, purity is an open problem and, a priori, crystallinity is not known. Indeed, a priori, we do not even know that $\rho_\lambda$ is Hodge--Tate.

\subsection{Methods}

Our proof of \Cref{mainthm} proceeds in two steps. First, in \Cref{thm:partial-irred}, we prove, with no assumptions on the prime $\lambda$, that either $\rho_\lambda$ is irreducible or it decomposes as a direct sum of two irreducible two-dimensional representations that are Hodge--Tate regular and odd. Our key input is \Cref{thm:mock-ramanujan}, which demonstrates that, in many cases, the Jacquet--Shalika bounds \cite{jacquetshalika1}, can be used to rule out the existence of certain subrepresentations. These bounds are weaker than those predicted by the generalised Ramanujan conjecture, and are known in general for cuspidal automorphic representations of $\GL_n$,

If these two-dimensional subrepresentations of $\rho_\lambda$ were modular, then a routine $L$-functions argument would lead to a contradiction. By recent work of Pan \cite{pan}, these representations are modular if they are de Rham and if $\l\ge 5$, but, a priori, these representations need not even be Hodge--Tate. On the other hand, a criterion of Jorza \cite{jorza2012} shows that, if $\pi$ is unramified at $\l$ and if $\lambda\mid\l$, then $\rho_\lambda$ is crystalline if the four Satake parameters of $\pi_\l$ are distinct. 

Our second step is to observe that Jorza's criterion can be translated into a condition on the image of $\rho_\lambda$ for a single prime $\lambda$. Indeed, for all unramified primes $p\ne \l$, the Satake parameters of $\pi_p$  are (up to normalisation) exactly the eigenvalues $\rho_\lambda(\Frob_p)$. Combining work of Rajan \cite{Rajan} with the restrictions on the decomposition of $\rho_\lambda$ proven in \Cref{thm:partial-irred}, we prove that the characteristic polynomial of $\rho_\lambda(\Frob_p)$ has distinct roots for a set of primes $p$ of Dirichlet density $1$. Thus, $\rho_\lambda$ is crystalline for all $\lambda\mid\l$ for a set of primes $\l$ of Dirichlet density $1$, and we can apply Pan's result to reach a contradiction.

This distinctness of Satake parameters is a key input in the proof of \Cref{residual-irred-intro}. Indeed, in \Cref{lem:no-even}, we use it to prove that $\orho_\lambda$ cannot have an even two-dimensional subrepresentation for infinitely many $\lambda\in\LL$. In the cohomological case, this argument allows us to strengthen the result of \cite{DZ} to apply to all but finitely many primes, rather than just a density $1$ set of primes.

\subsection{The structure of this paper}

In \Cref{sec:autreps}, we review key properties of automorphic representations of $\Gf(\AQ)$. In particular, we define holomorphic (limit of) discrete series representations in terms of their $L$-parameters, and we discuss the transfer map from $\Gf$ to $\GL_4$. In \Cref{galreps-section}, we survey existing results on the construction of Galois representations associated to Siegel modular forms. In addition, we prove that, for low weight forms, the Galois representations are symplectic.

In \Cref{irred-section}, we prove \Cref{thm:partial-irred} and the key input \Cref{thm:mock-ramanujan}. In \Cref{distinctness}, we use the results of \Cref{irred-section} to prove that the Satake parameters of $\pi_p$ are distinct for a set of primes $p$ of Dirichlet density $1$. Applying a theorem of Jorza \cite{jorza2012}, we deduce that $\rho_\lambda$ is crystalline for all primes $\lambda\mid\l$ for a set of primes $\l$ of Dirichlet density $1$. Using this result, in \Cref{full-irred}, we apply a recent result of Pan \cite{pan} to complete the proof of \Cref{mainthm}. Finally, in \Cref{residual-irred-section}, we prove \Cref{residual-irred-intro}. 

\section{Automorphic representations of \texorpdfstring{$\Gf(\AQ)$}{GSp(4, A)}}\label{sec:autreps}

In this section, we review aspects of the local and global theory of automorphic representations of $\Gf(\AQ)$.

\begin{definition}
	
For a ring $R$, let
\[\Gf(R) = \set{\gamma\in\GL_4(R): \gamma^t J \gamma = \nu J,\ \nu \in R\t},\]
where  $J = \br{\begin{smallmatrix}
	0 & 0 & 0 & 1\\
	0 & 0 & 1 & 0\\
	0 & -1 & 0 & 0\\
	-1 & 0 & 0 & 0
\end{smallmatrix}}$. For $\gamma\in \Gf(R)$, the constant $\nu$ is called the \emph{similitude} of $\gamma$ and is denoted $\simil(\gamma)$. Let $\Sp_4(R)$ be the subgroup of elements for which $\simil(\gamma) = 1$.
\end{definition}

\subsection{Archimedean $L$-parameters}\label{background-archimedean}

If $\pi$ is a cuspidal automorphic representation of $\Gf(\AQ)$, then $\pi$ can be decomposed as a restricted tensor product $\pi = \tensor'_p\pi_p$, with each $\pi_p$ an admissible representation of $\Gf(\Qp)$. Throughout this paper, $\pi$ will denote a unitary cuspidal automorphic representation of $\Gf(\AQ)$ whose archimedean component $\pi_\infty$ is a holomorphic discrete series or limit of discrete series representation. In this subsection, we define these representations via their associated $L$-parameters. Our exposition follows \cite[\S3.1]{mok2014galois}.

Let $W_{\C}=\C\t$ denote the Weil group of $\C$ and let $W_\R$ be the Weil group of $\R$. Then
\[W_\R = \C\t\sqcup\C\t j,\]
where $j^2=-1$ and $jzj\ii=\zb$ for $z\iC\t$. We define a map 
\[|\cdot|\:W_\R\to \R\t\]
by sending $z\iC\t$ to $z\overline z = |z|^2$ and $j\mapsto -1$. This map induces an isomorphism $W_\R^{ab}\cong\R\t$. 

\subsubsection{$L$-parameters for \texorpdfstring{$\GL_2(\R)$}{GL(2,R)}}

We begin by defining the $L$-parameters that correspond to the discrete series and limit of discrete series representations of $\GL_2(\R)$. Let $w, n$ be integers, with $n\ge 0$ and ${n \equiv w+1\pmod 2}$. Define
\begin{align*}
	\phi_{(w;\,n)}\:W_\R&\to\GL_2(\C)\\
	z&\mapsto|z|^{-w}\br{\begin{smallmatrix}
		(z/\zb)^{n/2}\\&(z/\zb)^{-n/2}
	\end{smallmatrix}}\quad\text{for }z\iC\t\\
	j&\mapsto\br{\begin{smallmatrix}
		&1\\(-1)^n
	\end{smallmatrix}}.
\end{align*}

Here, if $z=re^{i\theta}\iC\t$, then we let $(z/\overline{z})^{n/2}=e^{in\theta}$. When $n\ge 1$, $\phi_{(w;\,n)}$ corresponds to the weight $n+1$ discrete series representation of $\GL_2(\R)$ with central character $a\mapsto a^{-w}$, for $a\iR\t$. When $n=0$, $\phi_{(w;\,n)}$ corresponds to the limit of discrete series representation of $\GL_2(\R)$ with central character $a\mapsto a^{-w}$. The parity condition on $w$ ensures that, in both cases, $\phi_{(w;\,n)}$ is the archimedean $L$-parameter attached to a classical modular form of weight $n+1$.

\subsubsection{$L$-parameters for \texorpdfstring{$\Gf(\R)$}{GSp(4, R)}}\label{background-l-params} 

Next, we define the $L$-parameters whose $L$-packets contain the (limit of) discrete series representations of $\Gf(\R)$. For integers $w, m_1, m_2$, with $m_1>m_2\ge 0$ and ${m_1+m_2\equiv w+1\pmod 2}$, we define an $L$-parameter 
\[\phi_{(w;\, m_1,m_2)}\:W_\R\to\Gf(\C)\]
by
\begin{align*}
	z&\mapsto |z|^{-w}\sdmat{(z/\zb)^{(m_1+m_2)/2}}{(z/\zb)^{(m_1-m_2)/2}}{(z/\zb)^{-(m_1-m_2)/2}}{(z/\zb)^{-(m_1+m_2)/2}}
\end{align*}
for $z\iC\t$, and
\begin{align*}
	j&\mapsto \br{\begin{smallmatrix}
		&&&1\\
		&&1&\\
		&(-1)^{m_1+m_2}&&\\
		(-1)^{m_1+m_2}&&&\end{smallmatrix}}.
\end{align*}

The image of $\phi_{(w;\, m_1,m_2)}$ lies in $\Gf(\C)$ and has similitude character given by
\begin{align*}
	z&\mapsto |z|^{-2w}\\
	j&\mapsto (-1)^{w} = (-1)^{m_1 + m_2+1}.
\end{align*}
If we compose $\phi_{(w;\, m_1,m_2)}$ with the inclusion $\Gf(\C)\hookrightarrow\GL_4(\C)$, the resulting representation of $W_\R$ is isomorphic to the direct sum $\phi_{(w;\, m_1+m_2)}\+\phi_{(w;\, m_1-m_2)}$ of parameters of discrete series representations of $\GL_2(\R)$.

The $L$-packet corresponding to $\phi_{(w;\, m_1,m_2)}$ has two elements:
\[\set{\pi^H_{(w;\, m_1,m_2)}, \pi^W_{(w;\, m_1,m_2)}}.\]
Both $\pi^H_{(w;\, m_1,m_2)}$ and $\pi^W_{(w;\, m_1,m_2)}$ have central character given by $a\mapsto a^{-w}$ for ${a\in \R\t}$. When $m_2\ge 1$ they are (up to twist) discrete series representations: $\pi^H_{(w;\, m_1,m_2)}$ is a holomorphic discrete series and $\pi^W_{(w;\, m_1,m_2)}$ is a generic discrete series representation.  When $m_2 = 0$, $\pi^H_{(w;\, m_1,0)}$ is a holomorphic limit of discrete series and $\pi^W_{(w;\, m_1,0)}$ is a generic limit of discrete series representation.

\subsection{The ``weight'' of a holomorphic (limit of) discrete series representation}\label{sec:weight}

The Blattner parameter $(k_1, k_2)$  of the holomorphic (limit of) discrete series representation $\pi^H_{(w;\, m_1,m_2)}$ is defined to be
\[k_1 = m_1 + 1,\quad k_2 = m_2 +2.\]
If $\pi$ is the automorphic representation corresponding to a classical Siegel modular form of weight $(k_1, k_2)$---i.e.\ a vector-valued Siegel modular form with weight $\Sym^{k_{1} - k_{2}}\det^{k_{2}}$---then $\pi_\infty$ is a holomorphic (limit of) discrete series representation with Blattner parameter $(k_{1}, k_{2})$. Hence, if $\pi$ is a cuspidal automorphic representation of $\Gf(\AQ)$ such that $\pi_\infty$ is a holomorphic (limit of) discrete series representation, then we refer to the Blattner parameter $(k_{1}, k_{2})$ of $\pi_\infty$ as the weight of $\pi$.

We say that $\pi$ is \emph{cohomological} or has \emph{high weight} if $k_{2}\ge 3$. If $k_{2}= 2$, then we say that $\pi$ is \emph{non-cohomological} or has \emph{low weight}. Cohomological automorphic representations can be realised in the \'etale cohomology of a local system on a suitable Shimura variety, while non-cohomological automorphic representations can only be realised in coherent cohomology.

\subsection{Other non-degenerate limits of discrete series for \texorpdfstring{$\Gf(\R)$}{GSp(4, R)}}

Finally, for completeness, we note that there is another $L$-packet of non-degenerate limits of discrete series. Following \cite{SchmidtHodge}, if $m$ is a positive integer, then this $L$-packet corresponds, up to twist, to the $L$-parameter $W_\R\to\Gf(\C)$ that sends
\begin{equation*}
	z\mapsto \sdmat{(z/\zb)^{m/2}}{1}{1}{(z/\zb)^{-m/2}},\qquad\qquad
	j\mapsto\br{\begin{smallmatrix}
		&&&1\\
		&&1&\\
		&1&&\\
		1&&&
	\end{smallmatrix}}.
\end{equation*}

If $\pi$ is a cuspidal automorphic representation of $\Gf(\AF)$, whose archimedean components are all either discrete series or non-degenerate limits of discrete series, then, by \cite[Thm.\ 10.5.1]{GoldringKoskivirta}, we can attach Galois representations to $\pi$. Assuming local-global compatibility results for these Galois representations as in \Cref{non-cohom-gal-rep}, it is likely that the arguments of this paper can be extended to prove a big image theorem for these Galois representations. 

\subsection{The transfer map}\label{arthur}

The principle of Langlands functoriality predicts that there is a global transfer map from automorphic representations of $\Gf$ to automorphic representations of $\GL_4$. Indeed, ${}^L\!\Gf = \Gf(\C)$ and ${}^L\!\GL_4 = \GL_4(\C)$, and the existence of the embedding $\Gf(\C)\hookrightarrow\GL_4(\C)$ of $L$-groups means that there should be a corresponding lifting of automorphic representations. This lifting should be compatible with the local Langlands correspondence: if an automorphic representation $\pi$ of $\Gf(\AQ)$ lifts to an automorphic representation $\Pi$ of $\GL_4(\AQ)$, then, for almost all primes $p$, the Weil--Deligne representation
\[W_{\Qp}\times\SL_2(\C)\to \GL_4(\C)\]
corresponding to $\Pi_p$ via the local Langlands correspondence for $\GL_4$ should be isomorphic to the Weil--Deligne representation
\[W_{\Qp}\times\SL_2(\C)\to \Gf(\C)\hookrightarrow\GL_4(\C)\]
corresponding to $\pi_p$ via the local Langlands correspondence for $\Gf$.
Moreover, an automorphic representation $\Pi$ should be in the image of this lifting if and only if it is of \emph{symplectic type}, i.e.\ if there is a Hecke character $\chi$ such that $\Pi\cong\Pi\dual\tensor\chi$ and the partial $L$-function $L^*(\ext\Pi\tensor\chi\ii, s)$ (which exists by \cite{kim2003functoriality}) has a pole at $s=1$.

This lifting has been achieved for \emph{globally generic} cuspidal automorphic representations of $\Gf(\AQ)$ by Asgari--Shahidi \cite{asgari-shahidi}. However, the automorphic representations that correspond to Siegel modular forms are not globally generic: their archimedean components are holomorphic (limit of) discrete series representations, which are not generic.

If $\pi$ is cohomological, then Weissauer \cite{Weissauersymplectic}*{Thm.\ 1.1} has shown that $\pi$ is weakly equivalent to a globally generic automorphic representation of $\Gf(\AQ)$. Combined with the result of Asgari--Shahidi, we obtain a weak lift of $\pi$ to $\GL_4$. In particular, the results of this paper apply unconditionally to cohomological Siegel modular forms.

More generally, the existence of a lift to $\GL_4$ follows from Arthur's endoscopic classification \cite{Arthur2013}. Indeed, the case of $\PGSp_4$, i.e.\ where $\pi$ has trivial central character, has been spelled out in detail in \cite{Arthur2004}, and this work has been generalised by Gee--Ta\"ibi \cite{geetaibi} to cover all automorphic representations of $\Gf$. However, these works are all dependent on several papers that have been announced, but have yet to appear: see the discussions in \cite{geetaibi}*{pp.\ 3} and in \cite{mok2014galois}*{pp.\ 527}.

\section{Galois representations associated to Siegel modular forms}\label{galreps-section}

In this section, we review the construction of Galois representations associated to Siegel modular forms. In Sections \ref{section-galreps-cohom} and \ref{section-galreps-noncohom}, we discuss the construction of Galois representations in the cohomological and non-cohomological cases. In \Cref{section-gsp4-valued}, we prove that, in the non-cohomological case, the Galois representations are symplectic.

\subsection{The case of cohomological weight}\label{section-galreps-cohom}

We review the construction of Galois representations attached to high weight Siegel modular forms.

\begin{theorem}\label{cohom-gal-rep}
	Let $\pi$ be a cuspidal automorphic representation of $\Gf(\AQ)$ of weight $(k_1, k_2)$, $k_1\ge k_2\ge 3$. Let $S$ denote the set of primes at which $\pi$ is ramified. Let $E$ denote the coefficient field of $\pi$. Then for every place $\lambda$ of $E$, of residue characteristic $\l$, there exists a continuous, semisimple, symplectic Galois representation
	\[\rho_\lambda\:\Ga\Q\to\Gf(\overline E_\lambda)\]
	that satisfies the following properties:
	\begin{enumerate}
		\item The representation is unramified at all primes $p\notin S\cup\{\l\}$.
				\item The similitude character $\simil\rho_\lambda$ is odd and $\rho_\lambda\simeq\rho_\lambda\dual\tensor\simil\rho_\lambda,$	where $\rho_\lambda\dual$ is the dual representation.
		\item The representation $\rho_\lambda|_{\Ql}$ is de Rham for all primes $\lambda$, and crystalline if $\l\notin S$.
		\item Local-global compatibility is satisfied up to semisimplification: for any prime $p\ne \l$, 
		\[\mathrm{WD}(\restr{\rho_\lambda}{\Qp})^{ss}\cong\mathrm{rec}_p(\pi_p\tensor|\simil|_p^{(3-k_1-k_2)/2})^{ss},\]
		where $\mathrm{rec}_p$ denotes the local Langlands reciprocity map \cite{gan2011local}. 
		\item The set of Hodge--Tate weights of $\rho_\lambda|_{\Ql}$ is $\set{0,  k_2-2,k_1-1, k_1+k_2-3}.$
	\end{enumerate}

	Moreover, if $\pi$ is not CAP $($i.e.\ of Saito--Kurokawa type$)$, then $\rho_\lambda$ satisfies the following stronger properties:
	\begin{enumerate}
	\setcounter{enumi}{6}
		\item Local-global compatibility is satisfied up to Frobenius semisimplification: for any prime $p\ne \l$, 
		\[\mathrm{WD}(\restr{\rho_\lambda}{\Qp})^{F\text{-}ss}\cong\mathrm{rec}_p(\pi_p\tensor|\simil|_p^{(3-k_1-k_2)/2}).\]
		
		\item The representation $\rho_\lambda$ is pure of weight $k_1 + k_2 - 3$. In particular, if $p\notin S\cup \{\l\}$ and if $\alpha\iC$ is a root of the characteristic polynomial of $\rho_\lambda(\Frob_p)$, then $|\alpha| = p^{\frac{k_1+k_2-3}2}$.
	\end{enumerate}

	Finally, if $\pi$ is neither CAP nor endoscopic and if $\l\ge 5$, then $\rho_\lambda$ is irreducible.
	
\end{theorem}

\begin{remark}
	If $p\notin S\cup\{\l\}$, $a_p$ is the (suitably normalised) eigenvalue of the Hecke operator $T_p$ and $\chi$ is the Galois character associated to the central character of $\pi$, then condition $(iv)$ implies that
	\[\Tr\rho_\lambda(\Frob_p)= a_p,\qquad
	\simil\rho_\lambda = \chi\epsilon_{\l}^{k_1+k_2-3},\]
	where $\epsilon_\l$ is the $\l$-adic cyclotomic character.
\end{remark}

\begin{proof}
When $\pi$ is CAP or endoscopic, the construction of $\rho_\lambda$ follows from the construction of Galois representations for elliptic modular forms. A discussion of these cases is given in \cite[pp. 537--538]{mok2014galois}. In these cases, $\rho_\lambda$ is reducible.

When $\pi$ is of general type (type \textbf{(G)} in the notation of \cite{schmidt_cap}), there are two different constructions of the compatible system of $\lambda$-adic Galois representations attached to $\pi$:
\begin{itemize}
\item The original construction, due to Laumon \cite{Laumon} and Weissauer \cite{Weissauer}, builds on previous work of Taylor \cite{taylor1993}, and works directly with a symplectic Shimura variety. The Galois representations are constructed from the \'etale cohomology of Siegel threefolds. The fact that the Galois representations are valued in $\Gf$ was proven by Weissauer in \cite{Weissauersymplectic}.
\item The second construction, due to Sorensen \cite{sorensen}, utilises the transfer map from $\Gf$ to $\GL_4$ in combination with Harris--Taylor's construction of Galois representations for automorphic representations of $\GL_4$, which uses unitary Shimura varieties \cite{harris-taylor}. Sorensen's costruction works in the more general setting of automorphic representations $\pi$ of $\Gf(\A_F)$, with $F$ totally real, so long as there exists a weak functorial lift of $\pi$ to $\GL_4$. For cohomological automorphic representations of $\Gf(\AQ)$, the existence of this lift is due to Weissauer \cite{Weissauersymplectic}*{Thm.\ 1}. Using this construction, Mok \cite[Thm.\ 3.5]{mok2014galois} proves local-global compatibility at ramified primes.
\end{itemize}

Irreducibility when $\l$ is sufficiently large is \cite[Thm.\ B]{Ramakrishnan}. Using \cite{pan}*{Thm.\ 1.0.4} in place of \cite{taylor-potential-modularity} in Ramakrishnan's proof, the condition on $\l$ can be taken to be $\l\ge 5$.
\end{proof}

\subsection{The case of non-cohomological weight}\label{section-galreps-noncohom}

The situation for low weight automorphic representations is much less comprehensive. Since the automorphic representations are non-cohomological, the associated Galois representations cannot be constructed directly from the \'etale cohomology of symplectic or unitary Shimura varieties. Instead, they are constructed as limits of cohomological Galois representations. The process of taking a limit of Galois representations loses information, in particular about local-global compatibility and geometricity at $\l$.

\begin{theorem}\label{non-cohom-gal-rep}
	Let $\pi$ be a cuspidal automorphic representation of $\Gf(\AQ)$ of weight $(k,2)$. Suppose that $\pi$ is not CAP or endoscopic. Let $S$ denote the set of primes at which $\pi$ is ramified.  Let $E$ denote the coefficient field of $\pi$. Then for every place $\lambda$ of $E$, of residue characteristic $\l$, there exists a continuous, semisimple, symplectic Galois representation
	\[\rho_\lambda\:\Ga\Q\to\Gf(\overline E_\lambda)\]
	that satisfies the following properties:
	\begin{enumerate}
		\item The representation is unramified at all primes $p\notin S\cup\{\l\}$.
		\item If $p\notin S\cup\{\l\}$, $a_p$ is the (suitably normalised) eigenvalue of the Hecke operator $T_p$ and $\chi$ is the Galois character associated to the central character of $\pi$, then 
		\[\Tr\rho_\lambda(\Frob_p)= a_p,\qquad
		\simil\rho_\lambda = \chi\epsilon_{\l}^{k_1+k_2-3},\]
		where $\epsilon_\l$ is the $\l$-adic cyclotomic character.
		\item The similitude character $\simil\rho_\lambda$ is odd and $\rho_\lambda\simeq\rho_\lambda\dual\tensor\simil\rho_\lambda$.
		\item Local-global compatibility is satisfied up to semisimplification:  for any prime $p\ne \l$, 
		\[\mathrm{WD}(\restr{\rho_\lambda}{\Qp})^{ss}\cong\mathrm{rec}_p(\pi_p\tensor|\simil|_p^{(3-k_1-k_2)/2})^{ss}.\]
		\item The Hodge--Tate--Sen weights $($i.e.\ the eigenvalues of the Sen operator$)$  of $\rho_\lambda|_{\Ql}$ are $\{0, 0, k-1, k-1\}.$
		\item If $\l\notin S$ and the roots of the $\l$-th Hecke polynomial of $\pi$ are pairwise distinct, then $\rho_\lambda|_{\Ql}$ is crystalline.
	\end{enumerate}
\end{theorem}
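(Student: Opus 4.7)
The plan is to construct $\rho_{\pi,\l}$ by $p$-adically interpolating the cohomological Galois representations of Theorem \ref{cohom-gal-rep}. First I would invoke Arthur's classification to transfer $\pi$ to a cuspidal automorphic representation $\Pi$ of $\GL_4(\AQ)$; because $\pi$ is of general type this transfer is cuspidal. The archimedean component of $\Pi$ is non-regular (the weight has $k_2=2$), so it falls outside the range of the Harris--Taylor--Shin construction. Instead, I would realise $\pi$ as a non-cohomological classical point on a suitable eigenvariety (for $\Gf$, or via $\Pi$ on a self-dual eigenvariety for a unitary group in four variables) on which classical cohomological points of higher weight accumulate. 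Continuity of the associated family of pseudorepresentations produces a four-dimensional, continuous, semisimple $\rho_{\pi,\l}: \Ga\Q \to \GL_4(\Qlb)$, with traces of Frobenius at unramified primes matching the Hecke eigenvalues and determinant $\epsilon\chi_\l^{2(k-1)}$.

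Properties (1), (2), (4), (5) would then follow by specialising the family. Unramifiedness outside $S\cup\{\l\}$ and the trace identity of (2) are immediate from the interpolation of characteristic polynomials of Frobenius, and local-global compatibility up to semisimplification (part (4)) is obtained by interpolating the trace of Frobenius on the semisimplified Weil--Deligne representation, using the fact that normalised Satake parameters vary $p$-adically analytically on the eigenvariety (this is where ramified primes $p\ne\l$ require the most care, and the safest route is via Mok's extension of Sorensen's construction). For the Hodge--Tate--Sen weights in (5), the Sen polynomial is locally analytic on the eigenvariety, so the weights $\set{0,k_2-2,k_1-1,k_1+k_2-3}$ at cohomological points degenerate to $\set{0,0,k-1,k-1}$ when $k_2=2$. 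Crystallinity under Jorza's hypothesis in (6) is a direct appeal to \cite[Theorem 3.1]{jorza2012}, whose input (the four Satake parameters at $\l$ being distinct) is exactly the non-degeneracy condition that allows triangulation of the associated $(\varphi,\Gamma)$-module.

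The hard part will be property (3): promoting $\rho_{\pi,\l}$ from a $\GL_4$-valued to a $\Gf$-valued representation with odd similitude in the non-cohomological setting. Limiting from cohomological points gives the isomorphism $\rho_{\pi,\l}\dual\cong\rho_{\pi,\l}\otimes\chi^{-1}$ for $\chi=\epsilon\chi_\l^{k-1}$, and $\chi$ is odd because it is odd at every cohomological point in the family. Self-duality up to $\chi$ then forces a non-degenerate $\Ga\Q$-equivariant pairing $V\otimes V \to \chi$ which is either symmetric or alternating; to conclude we must rule out the orthogonal case. My strategy is to show that the invariant line lives in $\wedge^2\rho_{\pi,\l}$ rather than $\Sym^2\rho_{\pi,\l}$: at each cohomological classical point $x$ in the family, Theorem \ref{cohom-gal-rep}(3) shows the pairing is alternating, so $\chi$ occurs with multiplicity one in $\wedge^2\rho_x$ and with multiplicity zero in $\Sym^2\rho_x$. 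Since these multiplicities can be read off the family of traces (via $\Tr(\wedge^2\rho)$ and $\Tr(\Sym^2\rho)$, both of which are polynomial expressions in $\Tr\rho(g)$ and $\Tr\rho(g^2)$), and since cohomological points accumulate at $\pi$, the alternating conclusion passes to $\rho_{\pi,\l}$. Combined with the odd sign of $\chi$ at complex conjugation, this produces the symplectic structure and finishes the proof.
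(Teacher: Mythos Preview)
Your construction via $p$-adic interpolation on an eigenvariety, and your treatment of (1), (2), (4), (5), (6), is correct and is essentially the Mok--Sorensen construction that the paper invokes. The substantive divergence is in (3), specifically in showing that $\rho_{\pi,\l}$ factors through $\Gf$ rather than merely $\GL_4$.

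Your limiting argument for the symplectic pairing has a gap that the paper explicitly anticipates. Multiplicities of constituents are upper-semicontinuous along families of pseudocharacters: the occurrence of $\chi$ in $\wedge^2\rho_x$ at cohomological $x$ does pass to the limit (so you do get \emph{some} alternating form $V\times V\to\chi$), but the \emph{vanishing} of $\chi$ in $\Sym^2\rho_x$ does not. Worse, if $\rho_{\pi,\l}$ happens to be reducible, the alternating form you obtain may well be degenerate (e.g.\ if $\chi$ arises as $\det\tau_1$ for a two-dimensional summand $\tau_1$), and then you cannot conjugate into $\Gf$. The paper's remark following Theorem~\ref{image-Gsp4} makes exactly this point: a Bella\"iche--Chenevier style limit argument establishes symplecticity \emph{provided} $\rho_{\pi,\l}$ is absolutely irreducible, but irreducibility is the goal of the whole paper and is itself proved using the symplectic structure, so assuming it here is circular. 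The paper's fix is to replace Taylor's $\GL_4$-pseudocharacters with Lafforgue's $\Gf$-pseudorepresentations, which record the values of all $\Gf$-conjugation-invariant functions on tuples (generated, as the paper shows in Lemma~\ref{Gsp4-invariants}, by $s_1$, $s_2$ and $\simil^{\pm1}$); these carry the symplectic structure through the limit intrinsically, and the reconstruction theorem over $\Qlb$ then outputs a genuine $\Gf$-valued representation with no irreducibility hypothesis.
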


\begin{proof}
As in the cohomological case, there are two different constructions of the compatible system of $\lambda$-adic Galois representations attached to $\pi$. In both cases, $\rho_\lambda$ is constructed, via its pseudorepresentation, as a limit of cohomological Galois representations.
\begin{itemize}
\item The original construction, due to Taylor \cite{taylor1991galois}, uses the Hasse invariant to find congruences between the Hecke eigenvalue system of $\pi$ and mod $\l^n$ cohomological eigenforms $\pi_n$. The associated Galois pseudorepresentation is constructed as a limit of the Galois pseudorepresentations attached to the $\pi_n$. This construction is sufficient to prove the existence of the compatible system of Galois representations and parts $(i)$-$(iii)$ of the theorem.
\item A second construction, due to Mok \cite{mok2014galois}, extends the work of Sorensen \cite{sorensen} and constructs an eigencurve for $\Gf$. As in the cohomological case, this construction generalises to automorphic representations of $\Gf$ over totally real fields. However, the construction only requires the existence of a functorial lift from $\Gf$ to $\GL_4$ for cohomological forms, so the construction is still unconditional for automorphic representations of $\Gf(\AQ)$. Using this construction, Mok \cite[Thm.\ 3.5]{mok2014galois} proves local-global compatibility at ramified primes up to semisimplification. 
\end{itemize}

Part $(vi)$ is due to Jorza \cite[Thm.\ 4.1]{jorza2012}. Finally, the fact that the Galois representations are valued in $\Gf(\overline E_\lambda)$ is \Cref{image-Gsp4}.
\end{proof}

\begin{remark}
	While it should always be true that $\rho_\lambda|_{\Ql}$ is crystalline when $\l\notin S$, without the condition in part $(vi)$, we do not even know that $\rho_\lambda|_{\Ql}$ is Hodge--Tate.
\end{remark}

\begin{corollary}\label{bounded-conductor}
Let $\pi$ be a cuspidal automorphic representation of $\Gf(\AQ)$ of weight $(k,2)$, which is not CAP or endoscopic. Then there exists an integer $N$ such that, for all primes $\lambda$ of $E$, the Serre conductor of $\rho_\lambda$ divides $N$.
\end{corollary}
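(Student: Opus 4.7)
The plan is to bound the contribution of each prime $p$ to the Serre conductor uniformly in $\l$, and then to use the fact that the ramified set $S$ is finite. Recall that the Serre conductor of $\rho_{\pi,\l}$ is (by definition) the product $\prod_{p\ne \l} p^{a_p(\rho_{\pi,\l})}$, where $a_p(\rho_{\pi,\l})$ is the Artin conductor exponent of $\restr{\rho_{\pi,\l}}{G_{\Q_p}}$. Since $\rho_{\pi,\l}$ is unramified outside $S\cup\{\l\}$ by part (1) of Theorem \ref{non-cohom-gal-rep}, the only primes that can contribute to the Serre conductor are the primes $p\in S$ with $p\ne\l$.

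For a prime $p\in S$ with $p\ne\l$, I would extract the bound from the local--global compatibility statement of Theorem \ref{non-cohom-gal-rep}(4). If $(\rho,N)$ denotes the Weil--Deligne representation attached to $\restr{\rho_{\pi,\l}}{G_{\Q_p}}$, then the Artin conductor exponent is
$$a_p(\rho_{\pi,\l}) = a(\rho) + \dim V^{I_p} - \dim \ker\bigl(N|_{V^{I_p}}\bigr),$$
where $a(\rho)$ is the Artin conductor of the underlying Weil representation and $V$ is the four-dimensional underlying space. The first summand $a(\rho)$ is invariant under Frobenius semisimplification, so it is determined by $\mathrm{rec}_p(\pi_p\tensor|\simil|_p^{-3/2})^{ss}$, which depends only on $\pi_p$ and not on $\l$. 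The remaining contribution $\dim V^{I_p} - \dim\ker(N|_{V^{I_p}})$ is trivially bounded by $\dim V = 4$. Thus there exists an integer $a_p$, depending only on $\pi_p$, such that $a_p(\rho_{\pi,\l})\le a_p$ for every prime $\l\ne p$.

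Setting $N := \prod_{p\in S} p^{a_p}$, the Serre conductor of $\rho_{\pi,\l}$ divides $N$ for every $\l$, since at $p\in S$ with $p\ne\l$ we use the bound above, at $p\in S$ with $p=\l$ there is no contribution to the Serre conductor, and at $p\notin S\cup\{\l\}$ the representation is unramified.

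The main obstacle I anticipate is the reliance on local--global compatibility only up to semisimplification, which is why I separate the contribution of the monodromy operator $N$ (controlled crudely by the dimension) from the contribution of the Frobenius-semisimple Weil representation (controlled by $\pi_p$). One could attempt to sharpen this by appealing to a compatible-system statement to pin down $N$ uniformly in $\l$, but for the purpose of the present corollary this crude bound suffices.
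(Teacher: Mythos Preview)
Your proof is correct and follows essentially the same approach as the paper: both decompose the conductor exponent at each $p\in S$ into the Artin conductor of the underlying Weil representation (which is determined by the semisimplification, hence by $\pi_p$ via Theorem \ref{non-cohom-gal-rep}(4)) plus a monodromy contribution bounded crudely by $\dim V=4$, and then take the product over the finite set $S$. One small terminological point: the key invariance you need is under \emph{semisimplification} (not merely Frobenius semisimplification), which holds because the Artin conductor of a Weil representation with inertia acting through a finite quotient is determined by its character; the paper makes this explicit by passing through $\cond(\rho^{ss})$.
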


\begin{proof}
Fix a prime $\lambda$, and let $N_\rho$ be the Serre conductor of $\rho_\lambda$, where $\rho_\lambda$ is viewed as a representation valued in $\GL_4$. Let $S$ be the set of primes at which $\pi$ is ramified and let 
\[N_\pi = \prod_{p\in S}\cond(\mathrm{rec}_p(\pi_p\tensor|\simil|_p^{(3-k_1-k_2)/2}))\]
be the conductor of the transfer of $\pi$ to $\GL_4$. Since $S$ is finite, we can assume, without loss of generality, that $\l\notin S$. By definition,
\[N_\rho = \prod_{p\in S}\cond(\mathrm{WD}(\restr{\rho_\lambda}{\Qp})^{F\text{-}ss}).\]
A Weil--Deligne representation $(V, \rho, N)$ of $W_{\mathbb Q_p}$ has conductor
\[\cond(\rho)p^{\dim(V^I) - \dim(V^I_N)},\]
where $V^I$ is the subspace of $V$ fixed by the inertia group and $V^I_N= \ker(N)^I$. If $(V, \rho, N)$ is a Weil--Deligne representation, then, by definition, $(V, \rho, N)^{ss} = \rho^{ss}$. If, moreover, $(V, \rho, N)$ is Frobenius semisimple then $\rho^{ss} = \rho$, and it follows that
\[\cond(V, \rho, N) \mid \cond(\rho)p^{\dim(\rho)}.\]
Hence, $N_\rho$ divides $\prod_{p\in S}\cond(\mathrm{WD}(\restr{\rho_\lambda}{\Qp})^{ss})p^4.$
By part $(iv)$ of \Cref{non-cohom-gal-rep},
\[\prod_{p\in S}\cond(\mathrm{WD}(\restr{\rho_\lambda}{\Qp})^{ss})p^4=\prod_{p\in S}\cond(\mathrm{rec}_p(\pi_p\tensor|\simil|_p^{(3-k_1-k_2)/2})^{ss})p^4,\]
which divides
\[\prod_{p\in S}\cond(\mathrm{rec}_p(\pi_p\tensor|\simil|_p^{(3-k_1-k_2)/2}))p^4.\]
Since 
\[\prod_{p\in S}\cond(\mathrm{rec}_p(\pi_p\tensor|\simil|_p^{(3-k_1-k_2)/2}))p^4=N_{\pi}\prod_{p\in S}p^4,\]
we deduce that $N_\rho\mid N_\pi\prod_{p\in S}p^4$.
\end{proof}

\subsection{Galois representations valued in \texorpdfstring{$\Gf$}{GSp(4)}}\label{section-gsp4-valued}

The goal of this section is to prove the following theorem:

\begin{theorem}\label{image-Gsp4}
Let $\pi$ be a cuspidal automorphic representation of $\Gf(\AQ)$ of weight $(k,2)$, with associated $\lambda$-adic Galois representation $\rho_\lambda\:\Ga \Q\to\GL_4(\overline E_\lambda)$. Then $\rho_\lambda$ is isomorphic to a representation that factors through $\Gf(\overline E_\lambda)$.
\end{theorem}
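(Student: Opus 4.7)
The plan is to exploit the construction of $\rho_{\pi,\l}$ as an $\l$-adic limit of cohomological Galois representations---following Taylor \cite{taylor1991galois}---each of which is $\Gf(\Qlb)$-valued by Weissauer's theorem (Theorem \ref{cohom-gal-rep}). The key observation is that the $\Gf$-valued condition can be encoded purely in terms of traces, which permits passage to the pseudorepresentation limit.

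First, I would prove essential self-duality: $\rho_{\pi,\l}^\vee \simeq \rho_{\pi,\l} \otimes \nu$ for some Galois character $\nu$. Each cohomological approximant $\rho_n$ with similitude $\nu_n$ satisfies the trace identity
$$\operatorname{tr}(\rho_n(g^{-1})) = \nu_n(g)^{-1}\,\operatorname{tr}(\rho_n(g)) \quad \text{for all } g\in\Ga\Q.$$
Since the pseudorepresentation of $\rho_{\pi,\l}$ is the $\l$-adic limit of those of the $\rho_n$, and the characters $\nu_n$ converge to a limit character $\nu$, this relation persists for $\rho_{\pi,\l}$. As $\rho_{\pi,\l}$ is semisimple, this yields the desired isomorphism of representations and hence a nondegenerate bilinear form $B\colon V\otimes V\to\Qlb(\nu)$ on the underlying space $V$.

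Second, I would show that $B$ is alternating rather than symmetric, which is the content distinguishing $\Gf$ from the orthogonal similitude group. The Frobenius--Schur indicator provides a trace-level criterion: on an irreducible summand, $B$ is alternating if and only if the trivial representation appears in $\Lambda^2 V \otimes \nu^{-1}$, which is detectable via the polynomial identity
$$\operatorname{tr}(\Lambda^2 \rho(g)) = \tfrac12\bigl(\operatorname{tr}(\rho(g))^2 - \operatorname{tr}(\rho(g^2))\bigr).$$
Each $\rho_n$ is symplectic, so the $L$-function $L(s, \Lambda^2 \rho_n \otimes \nu_n^{-1})$ has a pole at $s=1$. Transferring this analytic information to the limit, via $\l$-adic convergence of Satake parameters at unramified primes together with Chebotarev density, should force $B$ to be alternating and realise $\rho_{\pi,\l}$ in $\Gf(\Qlb)$ after a change of basis.

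The main obstacle I expect is twofold. First, the positivity statement $\dim \operatorname{Hom}(\mathbf{1}, \Lambda^2 \rho_n \otimes \nu_n^{-1}) \ge 1$ must be transferred across the $\l$-adic limit: pseudocharacter identities pass to limits cleanly, but invariant dimensions need not, so one needs to argue via an analytic or geometric invariant of the approximating family. Second, if $\rho_{\pi,\l}$ fails to be irreducible, the pairing $B$ permutes the isotypic components, and one must verify that the full decomposition admits a global symplectic structure---either via individual essentially self-dual summands being symplectic, or via pairs $(\sigma, \sigma^\vee \otimes \nu)$ forming hyperbolic symplectic planes. The case analysis, applied to the finitely many decomposition patterns of a four-dimensional semisimple representation and combined with the trace criterion applied componentwise, should then complete the argument.
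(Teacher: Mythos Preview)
Your approach is genuinely different from the paper's, and the obstacle you flag is real and not resolved by what you propose.

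The essential self-duality step is fine: the identity $\Tr(\rho(g^{-1})) = \nu(g)^{-1}\Tr(\rho(g))$ is a pseudocharacter relation and survives the limit. The problem is step two. For an irreducible $4$-dimensional essentially self-dual representation there is a symplectic/orthogonal dichotomy, but this sign is \emph{not} encoded in the classical pseudocharacter, and your mechanism for transporting it across the limit does not work. The pole of $L(s,\Lambda^2\rho_n\otimes\nu_n^{-1})$ is a complex-analytic statement about a sequence of cohomological forms whose weights go to infinity; $\l$-adic convergence of Satake parameters says nothing about the analytic behaviour of these distinct $L$-functions, and there is no ``limit $L$-function'' to which the pole transfers. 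If instead you try to argue directly with the automorphic exterior-square $L$-function of the transfer $\Pi$ on $\GL_4$, you are invoking that $\Pi$ is of symplectic type, which is essentially Arthur's classification---precisely what the paper avoids for this theorem.

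The reducible case is worse. Essential self-duality with multiplier $\nu$ does \emph{not} force a symplectic structure on a reducible $4$-dimensional representation. For instance, if $\rho\simeq\tau_1\oplus\tau_2$ with $\tau_1$ non-dihedral, $\tau_2$ dihedral from a quadratic field $K$, $\det\tau_1=\det\tau_2\cdot\epsilon_K$, and $\nu=\det\tau_1$, then $\rho$ is essentially self-dual with multiplier $\nu$, yet the unique $G$-equivariant form on $\tau_1$ with that similitude is alternating while the one on $\tau_2$ is symmetric, and no nondegenerate alternating form exists on $\rho$ with any similitude. Your case analysis would need to rule out such configurations \emph{a priori}, but the paper's proof that $\det\tau_1=\det\tau_2=\nu$ (Lemma~\ref{same-determinant}) already uses that $\rho_{\pi,\l}$ is symplectic, so you cannot appeal to it.

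The paper sidesteps all of this by replacing Taylor's pseudorepresentations with Lafforgue's $\Gf$-pseudorepresentations, which record the full conjugation-invariant data of a $\Gf$-valued map rather than just traces. The key computation (Lemma~\ref{Gsp4-invariants}) is that $\Z[\Gf^n]^{\Gf}$ is generated by $s_1$, $s_2$, and $\simil^{\pm1}$; since these functions of the cohomological $\rho_i$ land in the integral Hecke algebra, the $\Gf$-pseudorepresentation descends and passes to the limit, after which Lafforgue's reconstruction theorem produces a genuine $\Gf(\Qlb)$-valued representation. This packages the symplectic structure into the formalism from the outset, rather than trying to recover it afterwards.
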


The idea of the proof is to reformulate Taylor's original construction of $\rho_\lambda$, using V.\ Lafforgue's $G$-pseudorepresentations \cite{lafforgue2012chtoucas} in place of Taylor's pseudorepresentations \cite{taylor1991galois}. We are grateful to B. Stroh for providing a broad outline of the proof. The details of the proof are tangential to the remainder of the paper, so the reader should feel free to skip to the next section.

\begin{remark}
	This theorem has long been known to experts, however, prior to the appearance of this paper, no proof had appeared in the literature. Since the first appearance of this paper, an ostensibly simpler proof of \Cref{image-Gsp4} was given in \cite{voight-pacetti}*{Thm.\ 4.3.4}. Their proof is morally equivalent to the one we provide: the key fact in both proofs is that a representation valued in $\Gf$ is determined by its characteristic polynomial and its similitude. However, the argument given here is more robust, and does not require the low weight Siegel modular form to be a $p$-adic limit of cohomological eigenforms in characteristic $0$. Moreover, we expect that the arguments given here can be generalised to broader settings. For example, see \cite{weissberger} for a different application of these ideas.
\end{remark}

\subsubsection{Taylor's construction and the limitations of pseudorepresentations}

In \cite{taylor1991galois}, Taylor gives a blueprint for constructing Galois representations attached to low weight Siegel modular forms by utilising congruences with Siegel modular forms of cohomological weight. This subsection gives an overview of Taylor's construction.

Recall that $\pi$ is the cuspidal automorphic representation of $\Gf(\AQ)$ corresponding to a cuspidal Siegel modular eigenform of weight $(k,2)$ and level $\Gamma(N)$. Let $E$ be the finite extension of $\Q$ spanned by the Hecke eigenvalues of $\pi$, and fix a prime $\lambda$ of $E$ with residue characteristic $\l$. Following \cite{taylor1991galois}*{pp.\ 316}, let $\HH_p$ denote the Hecke algebra of $\Gf(\Z_p)$ bi-invariant functions $\Gf(\Qp)\to \Z$ that are supported in $\M_4(\Zp)$, and let $\HH^N = \widehat{\bigotimes}_{p\nmid N}\HH_p$. For each tuple $\vec{k} = (k_1, k_2)$ of weights, let $\T_{\vec{k}}$ denote the quotient of $\HH^N$ acting on the space of cuspidal Siegel modular forms of weight $\vec{k}$ and level $N$. Then $\T_{\vec{k}}\tensor\Q$ is a semisimple algebra.

Associated to $\pi$ is a character $\theta\:\HH^N\to\T_{(k, 2)}\to \O_{E_\lambda}$. Moreover, for each integer $i\ge 1$, the automorphic analogue of multiplying a classical form by the Hasse invariant \cite[Prop.\ 3]{taylor1991galois} gives a commutative diagram
\begin{center}
	\begin{tikzcd}
		\HH^N \arrow[r,"\theta"]\arrow[d,"\theta_i"]& \O_{E_\lambda}\arrow[d]\\
		\T_{\vec{k}_i}\tensor_\Z\O_{E_\lambda}\arrow[r,"r_i"]&\O_{E_\lambda}/\lambda^i
	\end{tikzcd}
\end{center}
where $\vec k_i = \br{k + a_\l\l^{i-1}(\l-1),2 + a_\l\l^{i-1}(\l-1)}$, with $a_\l\in\N$ a constant depending on $\l$. In the classical language, for each $i$, $\pi$ is congruent to a mod $\lambda^i$ eigenform of cohomological weight.

For every $i$, by \Cref{cohom-gal-rep} and the fact that $\T_{\vec{k}_i}\tensor\Q$ is semisimple, there is a finite extension $E_i/E_\lambda$ and a Galois representation
\[\rho_i\:\Ga \Q\to \Gf(\T_{\vec{k}_i}\tensor_\Z E_i)\]
such that $\Tr\rho_i(\Frob_p) = \theta_i(T_p)$ whenever $p\notin S_i$, for some finite set of places $S_i$. 

If we could compose $\rho_i$ with $r_i$ to construct a representation $\orho_i\:\Ga \Q\to \Gf(\O_{E_\lambda}/\lambda^i)$, then we would be able to construct $\rho_\lambda$ as the limit $\varprojlim_{i}\orho_i$. However, while $\Tr\rho_i(\Frob_p)\in \T_{\vec{k}_i}\tensor_\Z \O_{E_i}$ for all $p\notin S_i$, it is not necessarily true that $\rho_i$ can be chosen to be valued in $\Gf(\T_{\vec{k}_i}\tensor_\Z \O_{E_i})$. The solution to this problem is to work with pseudorepresentations. Associated to $\rho_i$ is a pseudorepresentation
\[T_i = \Tr\rho_i\:\Ga\Q\to\T_{\vec{k}_i}\tensor_\Z E_i\]
and, since $T_i(\Frob_p)\in\T_{\vec{k}_i}\tensor_\Z \O_{E_i}$ for all $p\notin S_i$ and since $\T_{\vec{k}_i}\tensor_\Z \O_{E_i}$ is flat, it is clear that
\[T_i \:\Ga\Q\to\T_{\vec{k}_i}\tensor_\Z \O_{E_i}\]
is valued in $\T_{\vec{k}_i}\tensor_\Z \O_{E_i}$. Composing with $r_i$, we obtain a pseudorepresentation
\[\overline{T}_i\:\Ga\Q\to\O_{E_i}/\lambda^i.\]
A computation shows that each $\overline T_i$ is in fact valued in $\O_{E_\lambda}/\lambda^i$ and that, for $i\ge m$, $\overline T_m \equiv\overline T_i\pmod{\lambda^m}$. Hence, there is a pseudorepresentation
\[T = \varprojlim_{i}\overline T_i\:\Ga \Q\to \O_{E_\lambda}\subset\overline E_\lambda.\]
It follows from the theory of pseudorepresentations \cite[Thm.\ 1]{taylor1991galois} that there is a semisimple Galois representation
\[\rho_\lambda\:\Ga\Q\to \GL_4(\overline E_\lambda)\]
associated to $T$, which is, by construction, the Galois representation associated to $\pi$.

Taylor's construction via pseudorepresentations shows that $\rho_\lambda$ is valued in $\GL_4(\overline E_\lambda)$, but is insufficient to show that the representation is isomorphic to one that is valued in $\Gf(\overline E_\lambda)$: taking the trace of $\rho_i$ `forgets' the fact that $\rho_i$ is symplectic. The proof of \Cref{image-Gsp4} follows the same structure as Taylor's proof, replacing pseudorepresentations with Lafforgue's $G$-pseudorepresentations. 

\subsubsection{Lafforgue pseudorepresentations}

In this subsection, we define Lafforgue pseudorepresentations and state their key properties. Most of these are lifted directly from \cite[Sec.\ 11]{lafforgue2012chtoucas} and \cite[Sec.\ 4]{bockle2016}.

Let $G$ be a split reductive group over $\Z$, and let $\Z[G^n]^{G}$ denote the ring of regular functions of $G^n$ that are invariant under conjugation by $G$. 

\begin{definition}
	Let $A$ be a topological ring, let $\Gamma$ be a topological group and let $C(\Gamma^n, A)$ denote the algebra of continuous functions $\Gamma^n\to A$. A (continuous) \emph{$G$-pseudorepresentation} $\Theta = (\Theta_n)_{n\ge 1}$ of $\Gamma$ over $A$ is a collection of continuous algebra homomorphisms
	$$\Theta_n\:\Z[G^n]^{G}\to C(\Gamma^n, A)$$
	for each integer $n\ge 1$, which are functorial in the following sense:
	
	\begin{enumerate}
		\item If $n, m\ge1$ are integers and if $\zeta\:\{1,\ldots,m\}\to\{1, \ldots,n \}$, then, for every $f\in \Z[G^m]^{G}$ and $\gamma_1,\ldots,\gamma_n\in \Gamma$, we have
		$$\Theta_n(f^\zeta)(\gamma_1,\ldots,\gamma_n) = \Theta_m(f)(\gamma_{\zeta(1)},\ldots, \gamma_{\zeta(m)}),$$
		where $f^\zeta(\gamma_1,\ldots,\gamma_n) := f(\gamma_{\zeta(1)},\ldots,\gamma_{\zeta(m)})$.
		\item For every integer $n\ge 1$, $f\in \Z[G^n]^{G}$ and $\gamma_1,\ldots,\gamma_{n+1}\in \Gamma$, we have
		$$\Theta_{n+1}(\hat{f})(\gamma_1,\ldots,\gamma_{n+1}) = \Theta_n(f)(\gamma_1, \ldots,\gamma_{n-1}, \gamma_n\gamma_{n+1}),$$
		where $\hat{f}(\gamma_1, \ldots, \gamma_{n+1}) :=f(\gamma_1, \ldots, \gamma_{n-1}, \gamma_n\gamma_{n+1})$. 
	\end{enumerate}
\end{definition}

For a category-theoretic interpretation of this definition, see \cite{Weidner}.

As with classical pseudorepresentations, we can change the ring $A$. The following facts are immediate from the definitions:

\begin{lemma}\label{pseudorep-compose}
	Let $A, A'$ be topological rings.
	\begin{enumerate}
		\item If $h\:A\to A'$ is a morphism of topological rings and if $\Theta = (\Theta_n)_{n\ge 1}$ is a $G$-pseudo\-representation over $A$, then $h_*(\Theta) = (h\circ\Theta_n)_{n\ge 1}$ is a $G$-pseudorepresentation over $A'$.
		\item Let $h\:A \hookrightarrow A'$ be an injection of topological rings and let $\Theta'$ be a $G$-pseudo\-representation over $A'$. Suppose that, for every $f\in \Z[G^n]^{G}$, $\Theta'_n(f) = h\circ g$ for some $g \in C(\Gamma^n, A)$. Then the collection $\Theta = (\Theta_n)_{n\ge 1}$ given by $\Theta_n(f) = g$ is a $G$-pseudo\-representation over $A$, and $\Theta' = h_*(\Theta)$.
	\end{enumerate}
\end{lemma}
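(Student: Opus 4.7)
The plan is to verify both statements directly from the axioms of a $G$-pseudorepresentation. Nothing deep is required: the two functoriality conditions are pointwise identities between continuous functions $\Gamma^n\to A$, and both parts follow either by post-composing with $h$ (part 1) or by factoring through $h$ and using injectivity (part 2).

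For part (1), the first step is to observe that post-composition with $h$ induces a continuous algebra map $C(\Gamma^n, A)\to C(\Gamma^n, A')$ for each $n$, so each $h\circ \Theta_n$ is again a continuous algebra homomorphism $\Z[G^n]^{G}\to C(\Gamma^n, A')$. The two functoriality axioms then follow by evaluating at $(\gamma_1,\ldots,\gamma_n)\in \Gamma^n$ and applying $h$ to the identities that hold by assumption for $\Theta$: for example, $h(\Theta_n(f^\zeta)(\gamma_1,\ldots,\gamma_n))=h(\Theta_m(f)(\gamma_{\zeta(1)},\ldots,\gamma_{\zeta(m)}))$ is literally axiom (1) for $h_*(\Theta)$, and axiom (2) is identical.

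For part (2), injectivity of $h$ ensures that the element $g\in C(\Gamma^n,A)$ with $h\circ g=\Theta'_n(f)$ is unique, so the assignment $\Theta_n(f)=g$ is well-defined. To see that each $\Theta_n$ is an algebra homomorphism I would apply this uniqueness: the identity $h\circ(g_1 g_2)=\Theta'_n(f_1)\Theta'_n(f_2)=\Theta'_n(f_1 f_2)$ forces $\Theta_n(f_1 f_2)=g_1 g_2$, and likewise for sums and the unit; continuity of each $\Theta_n(f)$ is built into the hypothesis that $g\in C(\Gamma^n,A)$, so no separate topological argument is needed. The two functoriality axioms for $\Theta$ are then obtained by applying $h$, invoking the corresponding axiom for $\Theta'$, and descending via injectivity, after which $\Theta'=h_*(\Theta)$ is immediate from the construction. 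The only step requiring any care is this use of injectivity to transfer the algebra structure from $A'$ back to $A$; otherwise the proof is entirely formal, in line with the authors' remark that the lemma is immediate from the definitions.
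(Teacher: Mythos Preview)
Your proof is correct and matches the paper's approach: the paper simply states that these facts ``are immediate from the definitions'' and gives no further argument, and your direct verification of the algebra-homomorphism and functoriality axioms via post-composition (part 1) and injectivity (part 2) is exactly what that phrase intends.
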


The connection between $G$-pseudorepresentations and $G$-valued representations is encapsulated in the following lemma:

\begin{lemma}
	Let $\rho\:\Gamma\to G(A)$ be a continuous homomorphism. For each integer $n\ge1$, let
	\begin{align*}
		\Theta_n\:\Z[G^n]^{G}\to C(\Gamma^n, A)
	\end{align*}
	be given by
	$$\Theta(f)(\gamma_1,\ldots,\gamma_n) = f(\rho(\gamma_1),\ldots,\rho(\gamma_n)).$$
	Then the collection $(\Theta_n)_{n\ge1}$ is a $G$-pseudorepresentation, which we denote by $\Tr\rho$.
\end{lemma}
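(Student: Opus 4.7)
The statement is a purely formal verification that the construction $\Theta_n(f)(\gamma_1,\ldots,\gamma_n) = f(\rho(\gamma_1),\ldots,\rho(\gamma_n))$ satisfies the three pieces of data required by the definition of a $G$-pseudorepresentation: each $\Theta_n$ is a continuous algebra homomorphism, and the two compatibility axioms (1) and (2) hold. My plan is simply to check each in turn; there is no conceptual obstacle, and the proof will be essentially tautological.

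First, I would verify that for each $n \ge 1$, the map $\Theta_n$ is a well-defined continuous algebra homomorphism. Given $f \in \Z[G^n]^G$, regarded as a regular function $G^n \to \mathbb{A}^1$, one obtains the continuous function $(\gamma_1,\ldots,\gamma_n) \mapsto f(\rho(\gamma_1),\ldots,\rho(\gamma_n))$ from $\Gamma^n$ to $A$ by composing $\rho^n:\Gamma^n \to G(A)^n$ (continuous since $\rho$ is) with the evaluation of $f$ on $A$-points (continuous because $f$ is a polynomial in the matrix entries). The assignment $f \mapsto f \circ \rho^n$ is manifestly an $A$-algebra map, since sums and products of regular functions are computed pointwise.

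Next, I would check axiom (1). For $\zeta:\{1,\ldots,m\} \to \{1,\ldots,n\}$ and $f \in \Z[G^m]^G$, the definition of $f^\zeta$ gives
\[
\Theta_n(f^\zeta)(\gamma_1,\ldots,\gamma_n) = f^\zeta(\rho(\gamma_1),\ldots,\rho(\gamma_n)) = f(\rho(\gamma_{\zeta(1)}),\ldots,\rho(\gamma_{\zeta(m)})),
\]
which is exactly $\Theta_m(f)(\gamma_{\zeta(1)},\ldots,\gamma_{\zeta(m)})$. Finally, for axiom (2), the definition of $\hat f$ together with the fact that $\rho$ is a group homomorphism yields
\[
\Theta_{n+1}(\hat f)(\gamma_1,\ldots,\gamma_{n+1}) = f(\rho(\gamma_1),\ldots,\rho(\gamma_{n-1}),\rho(\gamma_n)\rho(\gamma_{n+1})) = f(\rho(\gamma_1),\ldots,\rho(\gamma_{n-1}),\rho(\gamma_n\gamma_{n+1})),
\]
which equals $\Theta_n(f)(\gamma_1,\ldots,\gamma_{n-1},\gamma_n\gamma_{n+1})$.

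There is no genuine obstacle here; the entire content is that $\rho$ being a continuous group homomorphism is exactly what is needed to make the composition $f \circ \rho^n$ depend functorially on $f$ in the prescribed way. The only point worth flagging is the continuity claim, where one should note that continuity of $\Theta_n(f)$ as a function $\Gamma^n \to A$ follows coordinate-wise from continuity of $\rho$ together with the fact that the matrix-coefficient functions defining $f$ are polynomial (hence continuous) operations on $A$.
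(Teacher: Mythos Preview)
Your verification is correct. The paper actually states this lemma without proof, treating it as immediate from the definitions; your argument is precisely the routine check that justifies this, and there is nothing to compare.
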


\begin{remark}
Fix an embedding $\iota\:G\hookrightarrow\GL_r$ for some $r$, and let $\chi$ denote the composition of $\iota$ with the usual trace function. Then $\chi\in \Z[G]^{G}$. Suppose that $\rho\:\Gamma\to G(A)$ is a representation with corresponding $G$-pseudorepresenation $\Tr\rho = (\Theta_n)_{n\ge1}$. Then $\Theta_1(\chi)\:\Gamma\to A$ is the classical pseudorepresentation associated to the representation $\iota\circ\rho$. Indeed, we have
$$\Theta_1(\chi)(\gamma) = \chi(\rho(\gamma)) = \Tr(\iota\circ\rho(\gamma)),$$
and the properties of this classical pseudorepresentation follow from the properties of $\Tr\rho$  \cite[Rmk.\ 11.8]{lafforgue2012chtoucas}.
\end{remark}

As in the case of classical pseudorepresentations, if $A$ is in fact an algebraically closed field, then every $G$-pseudorepresentation arises as the trace of a $G$-valued representation. 

\begin{theorem}[{\cite[Prop.\ 11.7]{lafforgue2012chtoucas}, \cite[Thm.\ 4.5]{bockle2016}}]\label{pseudorep-rep}
Let $A$ be an algebraically closed field and let $\Theta$ be a $G$-pseudorepresentation of $\Gamma$ over $A$. Then there is a completely reducible representation $\rho\:\Gamma\to G(A)$ such that $\Theta = \Tr(\rho)$.
\end{theorem}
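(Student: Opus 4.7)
The plan is to interpret the collection $\Theta=(\Theta_n)$ as a functorial assignment of $A$-points of the GIT quotients $G^n/\!/G = \mathrm{Spec}\,\Z[G^n]^{G}$. For each tuple $(\gamma_1,\ldots,\gamma_n)\in\Gamma^n$, the evaluation $f\mapsto \Theta_n(f)(\gamma_1,\ldots,\gamma_n)$ is an $A$-algebra homomorphism $\Z[G^n]^{G}\to A$, i.e.\ an $A$-point of $G^n/\!/G$. Since $G$ is split reductive and $A$ is algebraically closed, classical geometric invariant theory identifies these $A$-points with closed $G(A)$-orbits in $G(A)^n$, that is, with $G(A)$-conjugacy classes of semisimple $n$-tuples. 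The first (reindexing) axiom ensures that these classes are compatible under projection and permutation of factors.

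To promote this data to an actual homomorphism $\rho\colon\Gamma\to G(A)$, I would rigidify as follows. Fix $\gamma_0\in\Gamma$ and choose a representative $g_0\in G(A)$ of its associated conjugacy class. For each $\gamma\in\Gamma$, the class of the pair $(\gamma_0,\gamma)$ has a representative of the form $(g_0,g_\gamma)$, unique up to conjugation by the centraliser $Z_G(g_0)$. Because $g_0$ is semisimple, $Z_G(g_0)$ is again reductive, so iterating the construction with further elements of $\Gamma$ cuts down the ambiguity, and the reindexing axiom guarantees internal consistency of the resulting assignment $\gamma\mapsto g_\gamma =: \rho(\gamma)$ across all finite subsets of $\Gamma$.

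The second axiom
$$\Theta_{n+1}(\hat f)(\gamma_1,\ldots,\gamma_{n+1}) = \Theta_n(f)(\gamma_1,\ldots,\gamma_{n-1},\gamma_n\gamma_{n+1})$$
forces $\rho(\gamma)\rho(\gamma')$ and $\rho(\gamma\gamma')$ to lie in the same closed $G(A)$-orbit for every $\gamma,\gamma'\in\Gamma$; the rigidification upgrades this to an equality, so $\rho$ is a genuine homomorphism with $\Theta=\Tr(\rho)$. Continuity follows from continuity of the $\Theta_n$ applied to matrix coefficient functions after a faithful embedding $G\hookrightarrow\GL_r$ is fixed, and complete reducibility is immediate from semisimplicity of the representative tuples (their joint image generates a subgroup acting completely reducibly in any linear representation).

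The main obstacle is the rigidification step: the raw data only records semisimple tuples up to simultaneous conjugation, so producing a well-defined, multiplicative function on the whole of $\Gamma$ requires a delicate inductive argument using the reductivity of centralisers of semisimple elements, via Richardson's closed-orbit theorem. This is precisely where the hypotheses that $G$ be reductive and that $A$ be algebraically closed are both essential; over a general base one would only obtain a representation into $G$ up to conjugation by the centraliser of its image, which is the correct obstruction-theoretic content of the statement.
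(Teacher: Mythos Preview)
The paper does not prove this theorem; it is quoted with citation to \cite[Proposition 11.7]{lafforgue2012chtoucas} and \cite[Theorem 4.5]{bockle2016}, followed only by a remark on the meaning of ``completely reducible.'' There is therefore no in-paper argument to compare your proposal against.

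Your outline does capture the shape of Lafforgue's argument: a $G$-pseudorepresentation over an algebraically closed field $A$ is the same datum as a compatible family of $A$-points of the GIT quotients $G^n/\!/G$, and by Richardson's theorem such points correspond to closed $G(A)$-orbits, hence to $G$-completely reducible tuples. The second axiom then forces the representatives to be compatible with multiplication in $\Gamma$.

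The genuine gap is the passage from ``a coherent system of closed orbits indexed by finite subsets of $\Gamma$'' to ``a single homomorphism $\rho\colon\Gamma\to G(A)$'' when $\Gamma$ is infinite. Your proposed rigidification (fix $\gamma_0$, pick $g_0$, then pin down $g_\gamma$ up to $Z_G(g_0)$, and iterate) does not terminate: the centraliser of any finite semisimple tuple can remain positive-dimensional (the centre of $G$ is always there, and the whole image may sit in a proper Levi), so no finite stage determines $\rho$. Worse, the choices you make for different finite subsets of $\Gamma$ need not be simultaneously realisable, and the reindexing axiom alone does not furnish a common representative. What is actually required is an inverse-limit argument: for each finite $S\subset\Gamma$ the set of representatives compatible with the data is a nonempty affine $A$-variety, the transition maps are algebraic, and one must show that a cofiltered limit of such nonempty finite-type affine $A$-schemes is nonempty. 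This is the real content of the step you correctly flag as ``the main obstacle,'' and it is where the cited references do the work; your sketch identifies the difficulty but does not resolve it.
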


	Being completely reducible generalises the notion of a $\GL_n$-representation being semisimple. Since we will not use this notion, we refer the reader to \cite[Def.\ 3.3]{bockle2016} for the definition.

\subsubsection{Lafforgue pseudorepresentations and Galois representations}
A key step in Taylor's construction is to show that the pseudorepresentation $T_i$ is valued in $\T_{\vec{k}_i}\tensor_\Z \O_{E_i}$. The following lemma will enable us to prove the analogue of this fact when using $\Gf$-pseudorepresentations in place of pseudorepresentations.

\begin{lemma}\label{invariant-hypothesis}
	Let $\chi_1, \ldots, \chi_r\in \Z[G]^{G}$. Suppose that, for each integer $n\ge 1$, $\Z[G^n]^{G}$ is generated by functions of the form
	\[(\gamma_1,\ldots,\gamma_n)\mapsto \chi_j(\gamma_{\zeta(1)}^{a_1}\gamma_{\zeta(2)}^{a_2}\cdots \gamma_{\zeta(m)}^{a_m}),\]
	where $1\le j\le r$, $m\ge 1$, $\zeta\:\{1,\ldots, m\}\to\{1,\ldots, n \}$ and $a_j \iZ$. Let
	\[\rho\:\Gamma\to G(A)\]
	be a continuous representation. Then $\Theta = \Tr\rho$ is completely determined by $\Theta_1(\chi_1), \ldots,\Theta_1(\chi_r)$.
\end{lemma}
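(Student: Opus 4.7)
The plan is to exploit two straightforward facts: each $\Theta_n$ is an algebra homomorphism by the definition of a $G$-pseudorepresentation, and $\Theta = \Tr(\rho)$ is given explicitly by the formula $\Theta_n(f)(\gamma_1, \ldots, \gamma_n) = f(\rho(\gamma_1), \ldots, \rho(\gamma_n))$. Since $\Theta_n$ is a ring homomorphism, it is determined by its values on any ring-generating set of $\Z[G^n]^{G}$. By hypothesis, it therefore suffices to compute $\Theta_n(f)$ when $f$ has the form
$$f(g_1, \ldots, g_n) = \chi_j(g_{\zeta(1)}^{a_1} g_{\zeta(2)}^{a_2} \cdots g_{\zeta(m)}^{a_m}).$$

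For such a generator, unwinding the definition of $\Tr(\rho)$ and using that $\rho$ is a group homomorphism gives
$$\Theta_n(f)(\gamma_1, \ldots, \gamma_n) = \chi_j\bigl(\rho(\gamma_{\zeta(1)})^{a_1} \cdots \rho(\gamma_{\zeta(m)})^{a_m}\bigr) = \chi_j\bigl(\rho(\gamma_{\zeta(1)}^{a_1} \cdots \gamma_{\zeta(m)}^{a_m})\bigr) = \Theta_1(\chi_j)\bigl(\gamma_{\zeta(1)}^{a_1} \cdots \gamma_{\zeta(m)}^{a_m}\bigr),$$
which is manifestly determined by the single continuous function $\Theta_1(\chi_j) \colon \Gamma \to A$. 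Propagating through the algebra homomorphism $\Theta_n$ then determines $\Theta_n$ on all of $\Z[G^n]^{G}$.

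There is no real obstacle here; the content of the argument is simply that the functoriality built into a $G$-pseudorepresentation, combined with the fact that $\Theta$ comes from a genuine representation, forces every $\Theta_n(f)$ to be expressible in terms of the fixed invariants $\chi_1, \ldots, \chi_r$ evaluated on words in the $\gamma_i$'s. The purpose of the lemma, for use later in the paper, is a bookkeeping one: to verify any pointwise property of $\Theta$---in particular, that $\Theta$ takes values in a specified subring of $A$, which is the analogue in Taylor's construction of showing that the classical pseudorepresentation lands in the Hecke algebra---it will suffice to check the property for the finitely many functions $\Theta_1(\chi_j)$.
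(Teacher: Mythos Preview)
Your proof is correct and reaches the same endpoint as the paper's: for each generator $f$, one has $\Theta_n(f)(\gamma_1,\ldots,\gamma_n) = \Theta_1(\chi_j)(\gamma_{\zeta(1)}^{a_1}\cdots\gamma_{\zeta(m)}^{a_m})$. The route differs slightly. The paper first invokes the abstract functoriality axioms of a $G$-pseudorepresentation---axiom (1) to pass from $\Theta_n$ to $\Theta_m$, and axiom (2) inductively to collapse $\Theta_m$ down to $\Theta_1$---and only uses the explicit formula $\Theta = \Tr(\rho)$ at one intermediate step to absorb the exponents $a_i$. You instead apply the explicit formula $\Theta_n(f)(\gamma_\bullet) = f(\rho(\gamma_\bullet))$ from the outset and let the fact that $\rho$ is a group homomorphism do all the work in a single line. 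Since the hypothesis already grants $\Theta = \Tr(\rho)$, your shortcut is entirely legitimate and in fact cleaner; the paper's detour through the axioms buys nothing extra here, though it does make visible which parts of the argument would survive for an abstract $\Theta$ not known to come from a representation.
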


\begin{proof}
	Let $\Theta$ be a $G$-pseudorepresentation, $n\ge1$ be an integer, $\gamma_1,\ldots,\gamma_n\in \Gamma$ and $f\in \Z[G^n]^{G}$. Since each $\Theta_n$ is an algebra homomorphism, we may assume that $f$ is of the form
	\[f\:(\gamma_1,\ldots,\gamma_n)\mapsto \chi(\gamma_{\zeta(1)}^{a_1}\gamma_{\zeta(2)}^{a_2}\cdots \gamma_{\zeta(m)}^{a_m}),\]
	where $\chi = \chi_j$ for some $j$, $\zeta\:\{1,\ldots, m\}\to\{1,\ldots, n \}$ and $a_j \iZ$. 
	
	First note that $f = g^\zeta$ where $g\in \Z[G^m]^{G}$ is given by
	\[g\:(\gamma_1,\ldots,\gamma_m)\mapsto\chi(\gamma_1^{a_1}\cdots \gamma_m^{a_m}).\]
	It follows that
	\[\Theta_n(f)(\gamma_1,\ldots,\gamma_n) =\Theta_m(g)(\gamma_{\zeta(1)},\ldots,\gamma_{\zeta(m)}).\]

	Since $\Theta = \Tr(\rho)$, we observe that
	\begin{align*}
		\Theta_m(g)(\gamma_{\zeta(1)},\ldots,\gamma_{\zeta(m)}) &= g\br{\rho(\gamma_{\zeta(1)}),\ldots,\rho(\gamma_{\zeta(m)})}\\
		&=\chi\br{\rho(\gamma_{\zeta(1)})^{a_1}\cdots\rho(\gamma_{\zeta(m)})^{a_m}}\\
		&= \Theta_m(g')(\gamma_{\zeta(1)}^{a_1},\ldots,\gamma_{\zeta(m)}^{a_m}),
	\end{align*}
	where
	\[g'\:(\gamma_1,\ldots,\gamma_m)\mapsto\chi(\gamma_1\cdots \gamma_m).\]	
	If $m\ge 2$, then $g' = \hat h$, where $h\in \Z[G^{m-1}]^{G}$ is given by
	\[h\:(\gamma_1,\ldots, \gamma_{m-1})\mapsto\chi(\gamma_1\cdots \gamma_{m-1}),\]
	so that 
	\[\Theta_m(g')(\gamma_{\zeta(1)}^{a_1},\ldots,\gamma_{\zeta(m)}^{a_m}) = \Theta_{m-1}(h)(\gamma_{\zeta(1)}^{a_1},\ldots,\gamma_{\zeta(m-1)}^{a_{m-1}}\gamma_{\zeta(m)}^{a_m}).\]
	It follows by induction on $m$ that 
	\[\Theta_n(f)(\gamma_1,\ldots,\gamma_n) = \Theta_1(\chi)(\gamma_{\zeta(1)}^{a_1}\cdots\gamma_{\zeta(m)}^{a_m}).\]
\end{proof}

\begin{example}
If $G = \GL_n$, then by work of Processi  \cites{Procesi-invariants, deconcini-procesi-book}, the ring $\Z[\GL_n^m]^{\GL_n}$ is generated by $\det\ii$ and the functions $X\mapsto s_i(X)$ that map $X\in\GL_n$ to the $i$-th coefficient of its characteristic polynomial, and $\Q[\GL_n^m]^{\GL_n}$ is generated just by the trace map and by $\det\ii$. In particular, a Lafforgue $\GL_n$-pseudorepresentation over a characteristic $0$ field is completely determined by its associated Taylor pseudorepresentation (c.f.\ \cite[Rmk.\ 11.8]{lafforgue2012chtoucas}).
\end{example}

In order to prove \Cref{image-Gsp4}, we show that $\Gf$ satisfies the conditions of \Cref{invariant-hypothesis}:

\begin{lemma}\label{Gsp4-invariants}
	For an element $X\in \Gf$, let $t^4 + \sum_{i=1}^{4}(-1)^is_i(X)t^{4-i}$ be its characteristic polynomial. Then $\Z[\Gf^m]^{\Gf}$ is generated by $\simil^{\pm1}$ and the functions $X\mapsto s_i(X)$, $i = 1,2$, where $s_i(X)$ is the $i$-th coefficient of the characteristic polynomial.
\end{lemma}

\begin{proof}
	Let $\M_4$ denote the algebra of $4\times 4$ matrices. If $K$ is any infinite field, then, by \cite{Zubkov}*{Thm.\ 1}, the ring $K[\M_4^m]^{\Sp_4} = K[\M_4^m]^{\Gf}$ is generated by functions of the form
	\[(X_1, \ldots, X_m)\mapsto s_i(Y_{j_1}\cdots Y_{j_s}),\]
	where each matrix $Y_i$ is either $X_i$ or its symplectic transpose $X_i^t$. By \cite{Zubkov}*{Prop.\ 3.2}, the ring $K[\Sp_4^m]^{\Sp_4}$ is generated by the images of these functions under the canonical map $K[\M_4^m]\to K[\Sp_4^m]$. If $X\in \Sp_4$, then, by definition, $X^t = X\ii$. It follows that $K[\Sp_4^m]^{\Sp_4}$ is generated by functions of the form
	\[(X_1, \ldots, X_m)\mapsto s_i(Y_{j_1}\cdots Y_{j_s}),\]
	where each matrix $Y_i$ is either $X_i$ or $X_i\ii$.
	
	Now, the natural grading on $\Z[\M_4^m]$ by multi-homogeneous degree restricts to a filtration on $\Z[\Sp_4^m]^{\Sp_4}$, and each filtered piece is a finite $\Z$-module. Hence, by the same argument as \cite{deconcini-procesi-book}*{15.2.1}, it follows that $\Z[\Sp_4^m]^{\Sp_4}$ is generated by functions of the above form as well.
	
	To conclude, we argue as in \cite{Zubkov}*{pp.\ 316}. The natural surjection $\Sp_4^m\times\GL_1^m\to \Gf^m$ induces an embedding 
	\[\Z[\Gf^m]^{\Gf}\hookrightarrow (\Z[\Sp_4^m]\tensor\Z[\GL_1^m])^{\Sp_4\times\GL_1}\simeq \Z[\Sp_4^m]^{\Sp_4}\tensor\Z[\GL_1^m].\]
	But, by the above computations, every element of $\Z[\Sp_4^m]^{\Sp_4}$ extends to an element of $\Z[\Gf^m]^{\Gf}$. Hence, the above map is surjective. It follows that $\Z[\Gf^m]^{\Gf}$ is generated by $\simil^{\pm1}$ and by functions of the above form, as required.
\end{proof}

\begin{proof}[Proof of Theorem $\ref{image-Gsp4}$]
We use the notation from the beginning of the section. Consider the $\Gf$-pseudorepresentation $\Theta^{(i)}=\Tr\rho_i$ associated to
	$$\rho_i\:\Ga\Q\to \Gf(\T_{\vec{k}_i}\tensor_\Z E_i).$$
	By Lemmas \ref{invariant-hypothesis} and \ref{Gsp4-invariants}, $\Tr\rho_i$ is completely determined by 
	$$\Theta^{(i)}_1(s_1) = T_i\:\Ga \Q\to \T_{\vec{k}_i}\tensor_\Z E_i,$$
	$$\Theta^{(i)}_1(s_2)\:\Ga \Q\to \T_{\vec{k}_i}\tensor_\Z E_i$$
	and
	$$\Theta^{(i)}_1(\simil^{\pm1})\:\Ga\Q\to \T_{\vec{k}_i}\tensor_\Z E_i.$$
	Since each of these maps factors through $\T_{\vec{k}_i}\tensor_\Z\O_{E_i}$, it follows from \Cref{pseudorep-compose}$(ii)$ that we can view each $\Theta^{(i)}$ as a $\Gf$-pseudorepresentation over $\T_{\vec{k}_i}\tensor_\Z\O_{E_i}$. By \Cref{pseudorep-compose}$(i)$ we may compose $\Theta^{(i)}$ with the map $r_i\:\T_{\vec{k}_i}\tensor_\Z\O_{E_i}\to \O_{E_i}/\lambda^i$ to produce a new $\Gf$-pseudorepresentation $\overline{\Theta}^{(i)}$ of $\Ga\Q$ over $\O_{E_i}/\lambda^i$. Since each $\Theta^{(i)}$ is determined by $\Theta_1^{(i)}(s_i),\ i=1,2$ and $\Theta_1^{(i)}(\simil^{\pm1})$, it follows that $\overline{\Theta}^{(i)}$ is too. Hence, the arguments of Taylor summarised above show that these maps actually land in $\O_{E_\lambda}/\lambda^i$, so that each $\overline\Theta^{(i)}$ is actually a $\Gf$-pseudorepresentation over $\O_{E_\lambda}/\lambda^i$. Therefore, we can form a $\Gf$-pseudorepresentation
		$$\Theta = \varprojlim_{i}\overline{\Theta}^{(i)}$$
	of $\Ga\Q$ over $\O_{E_\lambda}$. Finally, viewing $\O_{E_\lambda}$ as a subalgebra of $\overline E_\lambda$, we may view $\Theta$ as a $\Gf$-pseudorepresentation over $\overline E_\lambda$ and, by \Cref{pseudorep-rep}, there is a representation
	$$\rho\:\Ga\Q\to\Gf(\overline E_\lambda),$$
	such that $\Theta = \Tr(\rho)$. This is the Galois representation associated to $\pi$. Indeed, 
	$$\Theta_1(s_1) =T\:\Ga \Q\to \overline E_\lambda$$
	is exactly the classical pseudorepresentation constructed by Taylor.
\end{proof}

\section{Restrictions on the decomposition of $\rho_\lambda$}\label{irred-section}

Recall that $\pi$ is a cuspidal automorphic representation of $\Gf(\AQ)$ of weight $(k, 2)$ in the notation of \Cref{sec:weight}, and that the weak functorial lift $\Pi$ of $\pi$ to $\GL_4$ exists and is cuspidal. The goal of this section is to prove the following theorem:

\begin{theorem}\label{thm:partial-irred}
	Keep the assumptions of Theorem $\ref{mainthm}$. For each prime $\lambda$, either:
	\begin{itemize}
		\item $\rho_\lambda$ is irreducible.
		\item $\rho_\lambda$ decomposes as a direct sum $\tau_1\+\tau_2$ of distinct, irreducible, two-dimensional representations, each with determinant $\simil\rho_\lambda$. Moreover, for each $i$, $\tau_i|_{\Ql}$ is Hodge--Tate with Hodge--Tate weights $\{0, k-1\}$.
	\end{itemize}
\end{theorem}

When $\pi$ is cohomological, \Cref{thm:partial-irred} is due to Weissauer \cite[Thm.\ II]{Weissauer} and Ramakrishnan \cite[Thm.\ A]{Ramakrishnan}. In the cohomological case, since $\rho_{\lambda}$ has distinct Hodge--Tate weights, the fact that the two representations are distinct and Hodge--Tate regular is obvious.

\subsection{Irreducibility without the Ramanujan bounds}

The proof of \cite[Thm.\ II]{Weissauer} makes crucial use of the Ramanujan bounds, which are not available in the non-cohomological case. In place of the Ramanujan bounds, we prove the following theorem, which requires only the Jacquet--Shalika bounds \cite{jacquetshalika1}, which are known for all cuspidal automorphic representations of $\GL_n$.

\begin{theorem}\label{thm:mock-ramanujan}
	Let $\rho\:\Ga\Q\to\GL_n(\Qlb)$ be a Galois representation. Assume that $\rho$ is automorphic: there exists a unitary cuspidal automorphic representation $\Pi$ of $\GL_n(\AQ)$ and an integer $w$ such that $L^*(\Pi, s-\frac w2) = L^*(\rho, s)$.
	
	Let $\tau$ be an $m$-dimensional subrepresentation of $\rho$, and suppose that $\det\tau|_{\Ql}$ is Hodge--Tate with Hodge--Tate weight $h$. Then
	\[{\left|h-\frac{mw}2\right|<\frac m2}.\]
\end{theorem}

\begin{remark}
	The generalised Ramanujan conjecture would give $h = \frac{mw}2$.
\end{remark}

\begin{proof}
	Fix an embedding $\Qb\hookrightarrow\C$ as well as a prime $p\ne \l$ at which both $\rho$ and $\Pi$ are unramified. Let $\alpha_1, \ldots, \alpha_n\iC$ be the Satake parameters of $\Pi_p$. Then, by assumption and via our fixed embedding, the eigenvalues of $\rho(\Frob_p)$ are $\alpha_1 p^{\frac w2}, \ldots, \alpha_n p^{\frac w2}$. 
	
	By \cite{jacquetshalika1}*{Cor.\ 2.5}, for each $i$, we have
	\[p^{\frac {w-1}2}<|\alpha_i p^{\frac w2}|< p^{\frac {w+1}2}. \]
	Since $\tau$ is an $m$-dimensional subrepresentation of $\rho$, it follows that $\det\tau(\Frob_p)$ is a product of $m$ of the eigenvalues of $\rho(\Frob_p)$. Thus
	\[p^{\frac {m(w-1)}2}<|\det\tau(\Frob_p)|<p^{\frac {m(w+1)}2}.\]
	On the other hand, the assumption on the Hodge--Tate weight of $\det\tau|_{\Ql}$ means that $\det\tau \simeq \chi\epsilon_\l^h$, where $\chi$ is a finite order character. Thus, $|\det\tau(\Frob_p)| = p^h$ and hence, ${\frac {m(w-1)}2}<h<{\frac {m(w+1)}2}$.
\end{proof}

\subsection{Proof of Theorem $\ref{thm:partial-irred}$}

\begin{lemma}\label{lem:aut-ind}
	Suppose that $\Pi$ is an automorphic induction. Then $\rho_\lambda$ is irreducible.
\end{lemma}

\begin{proof}
	Suppose that $\Pi$ is automorphically induced from an automorphic representation $\Pi'$ of $\GL_2(\A_F)$ for some quadratic extension $F/\Q$, and let $\rho_{\Pi', \lambda}\:\Ga K\to\GL_2(\overline E_\lambda)$ be the $\lambda$-adic Galois representation associated to $\Pi'$. It is well-known that $\rho_{\Pi', \lambda}$ is irreducible: when $F$ is real quadratic, this is due to  Ribet \cite{ribet-to-carayol}, while if $F$ is imaginary quadratic, Taylor \cite{taylorimaginary}*{Sec.\ 3} proved irreducibility in many cases. See \cite{weissthesis}*{Thm.\ 1.2.6} for a complete proof. 
	
	Since, by assumption, $\Pi$ is cuspidal, we have $\Pi' \not\cong (\Pi')^\sigma$, where $\sigma$ is the non-trivial element of $\Gal(F/\Q)$ \cite{ArthurClozel}. By local-global compatibility and the strong multiplicity one theorem for $\GL_n$, $\rho_{\lambda}\simeq \Ind_F^\Q\rho_{\Pi', \lambda}$ and $\rho_{\Pi', \lambda}\not\simeq\rho_{\Pi', \lambda}^\sigma$. It follows that $\rho_\lambda$ is irreducible.
\end{proof}

\begin{lemma}\label{lem:hodge--tate-subs}
	Suppose that $\tau$ is a proper subrepresentation of $\rho_\lambda$. Then $\tau$ is two-dimensional and $\tau|_{\Ql}$ is Hodge--Tate, with Hodge--Tate weights $\{0, k-1\}$.
\end{lemma}

\begin{proof}
	If $\tau$ is three-dimensional, then $\rho_\lambda$ also has a one-dimensional subrepresentation. Hence, we can assume that $\tau$ is $m$-dimensional, with $m = 1$ or $2$. By \Cref{non-cohom-gal-rep}, the Hodge--Tate--Sen weights of $\rho_\lambda|_{\Ql}$ are $\{0,0,k-1,k-1\}$. Thus, the Hodge--Tate--Sen weights $a_1,\ldots, a_m$ of $\tau|_{\Ql}$ are contained in $\{0,0,k-1,k-1\}$ and, by \Cref{thm:mock-ramanujan},  $a_1 + \cdots + a_m = \frac12 m(k-1)$. It follows that $m = 2$ and that $\tau|_{\Ql}$ has Hodge--Tate--Sen weights $\{0, k-1\}$. Since the Hodge--Tate--Sen weights are distinct integers, it follows that the Sen operator is semisimple \cite[Thm.\ 5.17]{mok2014galois}, and hence that $\tau|_{\Ql}$ is Hodge--Tate.
\end{proof}

\begin{lemma}\label{lem:no-finite}
	Suppose that $\rho_\lambda \simeq \tau_1\+\tau_2$, with $\tau_1, \tau_2$ irreducible and two-dimensional. Then there does not exist a finite order character $\chi$ with $\tau_1 \simeq\tau_2\tensor\chi$.
\end{lemma}

\begin{proof}
	Suppose that $\tau_1\simeq\tau_2\tensor\chi$ for a finite order character $\chi$. Then $\chi$ cuts out a finite cyclic extension $K$ of $\Q$, and $\rho_{\lambda}|_K\simeq\tau_1|_K\+\tau_1|_K$. We have
	\[\ext\rho_\lambda|_K \simeq (\tau_1|_K\tensor\tau_1|_K) \+ \det\tau_1|_K\+\det\tau_1|_K\simeq \Sym^2\tau_1|_K + \det\tau_1|_K^{\+3}.\]
	Now, $\det\tau_1|_K$ is Hodge--Tate at all places above $\l$ by \Cref{lem:hodge--tate-subs}, so, by class field theory, $L(\det\tau_1|_K, s)$ has meromorphic continuation to the whole of $\C$. Since $K/\Q$ is cyclic, by cyclic base change \cite{ArthurClozel} and by \cite{kim2003functoriality}, $\ext\rho_\lambda|_K$ is the Galois representation associated to an isobaric automorphic representation of $\GL_6(\A_K)$. Thus, $L(\ext\rho_\lambda|_K, s)$ has meromorphic continuation to the whole of $\C$. It follows that $L(\Sym^2\tau_1|_K, s)$  does too and, by twisting, so does $L(\Sym^2\tau_1\tensor\det\tau_1\ii|_K, s)$. 
	
	On the other hand,
	\[\rho_{\lambda}|_K\tensor\rho_{\lambda}|_K\dual\simeq(\tau_1\tensor\tau_1\dual)^{\+4}\simeq (\Sym^2\tau_1\tensor\det\tau_1\ii)^{\+4}\+{\mathbf{1}}^{\+4}.\]
	Let $\Pi$ be the transfer of $\pi$ to $\GL_4(\AQ)$. By assumption, $\Pi$ is cuspidal. Moreover, by \Cref{lem:aut-ind}, we may assume that $\Pi$ is not an automorphic induction. Hence, the cyclic base change $\Pi_K$ of $\Pi$ is also cuspidal \cite{ArthurClozel}*{Thm.\ 4.2} and, by \cite{jacquet1981euler}*{Prop.\ 3.6}, the $L$-function
	\[L^*(\Pi_K\tensor\Pi_K\dual,s) = L^*(\rho_{\lambda}|_K\tensor\rho_{\lambda}|_K\dual,s)\]
	has a simple pole at $s = 1$. Since $L(\mathbf{1}, s) = \zeta(s)$ also has a simple pole at $s = 1$, it follows that
	\[\ord_{s=1}L(\Sym^2\tau_1\tensor\det\tau_1\ii|_K, s)^4 = 3.\]
	But $L(\Sym^2\tau_1\tensor\det\tau_1\ii|_K, s)$ is meromorphic, so its order of vanishing at $s = 1$ must be an integer. This is a contradiction.
\end{proof}

\begin{proof}[Proof of Theorem $\ref{thm:partial-irred}$]
	After \Cref{lem:hodge--tate-subs}, it remains to show that, if $\rho_\lambda \simeq\tau_1\+\tau_2$, with $\tau_1, \tau_2$ irreducible and two-dimensional, then $\det\tau_1 \simeq \det\tau_2\simeq\simil\rho_\lambda$.
	
	Let $\chi = \det\tau_1\ii\tensor\simil\rho_\lambda$. By \Cref{lem:hodge--tate-subs}, $\chi|_{\Ql}$ has Hodge--Tate weight $0$, i.e.\ $\chi$ is a finite order character. We need to show that $\chi$ is the trivial character. First, since
	\[(\simil\rho_\lambda)^2 \simeq\det\rho_\lambda\simeq\det\tau_1\det\tau_2,\]
	it follows that $\chi\simeq\det\tau_2\tensor\simil\rho_\lambda\ii$. Moreover,
	\begin{align*}
				\tau_1\+\tau_2\simeq\rho_\lambda&\simeq \rho_\lambda\dual\tensor\simil\rho_\lambda\\
				&\simeq \br{\tau_1\dual\tensor\simil\rho_\lambda}\+\br{\tau_2\dual\tensor\simil\rho_\lambda}\\
				&\simeq \br{\tau_1\tensor\det\tau_1\ii\tensor\simil\rho_\lambda} \+\br{\tau_2\tensor\det\tau_2\ii\tensor\simil\rho_\lambda}\\
				&\simeq \br{\tau_1\tensor\chi}\+\br{\tau_2\tensor\chi\ii}.
			\end{align*}
		
		By Schur's lemma and by \Cref{lem:no-finite}, it follows that $\tau_i\tensor\chi\simeq\tau_i$ for each $i=1, 2$. Thus, $\rho_\lambda\tensor\chi\simeq\rho_\lambda$. Let $\Pi$ be the transfer of $\pi$ to $\GL_4$. By class field theory, we may view $\chi$ as a Hecke character, and it follows that $\Pi$ and $\Pi\tensor\chi$ have the same Hecke polynomials at almost all primes. By the strong multiplicity one theorem for $\GL_4$, we have $\Pi\simeq\Pi\tensor\chi$. Hence, by \cite{ArthurClozel}*{Thm.\ 4.2}, if $\chi$ is non-trivial, then $\Pi$ is an automorphic induction, in which case, $\rho_\lambda$ is already irreducible by \Cref{lem:aut-ind}. Hence, $\chi$ must be trivial.
\end{proof}
\section{Crystallinity and distinctness of Satake parameters}\label{distinctness}

In the previous section, we proved that, if $\rho_\lambda$ is reducible, then it decomposes as a direct sum $\tau_1\+\tau_2$ of distinct, irreducible, odd, two-dimensional representations. In this section, we will apply that result to prove the following theorem:

\begin{theorem}\label{thm:distinctness}
	Keep the assumptions of Theorem $\ref{mainthm}$, and assume that the transfer $\Pi$ of $\pi$ to $\GL_4$ is not an automorphic induction. For a set of primes $p$ of Dirichlet density $1$, the roots of the $p$-th Hecke polynomial of $\pi$ are pairwise distinct.
\end{theorem}

Applying part \Cref{non-cohom-gal-rep}$(vi)$, i.e.\ \cite{jorza2012}*{Thm.\ 4.1}, we immediately deduce:

\begin{corollary}\label{cor:crystalline}
	The representation $\rho_\lambda|_{\Ql}$ is crystalline for all primes $\lambda\mid\l$ for a set of primes $\l$ of Dirichlet density $1$.
\end{corollary}

\begin{remark}
	If $\Pi$ is automorphically induced from an automorphic representation $\Pi'$ of $\GL_2(\A_F)$, then \Cref{cor:crystalline} still holds for $\pi$. Indeed, let $\rho_{\Pi', \lambda}$ be the $\lambda$-adic Galois representation attached to $\Pi'$. Then $\rho_\lambda\simeq \Ind_F^\Q\rho_{\Pi', \lambda}$. If $F$ is real quadratic, then $\Pi'$ arises from a cohomological Hilbert modular form. Hence, $\rho_{\Pi', \lambda}$ is crystalline, from which it is clear that $\rho_\lambda$ is too. If $F/\Q$ is imaginary quadratic, then one can prove \Cref{cor:crystalline} by slightly amending the arguments of this section. In both these cases, $\rho_\lambda$ is irreducible by \Cref{lem:aut-ind}.
\end{remark}

\subsection{The Lie algebra\texorpdfstring{ of $\rho_\lambda$}{}}

Fix a prime $\lambda$, and let $F/E_\lambda$ be a finite extension such that
\[\rho_\lambda\:\Ga\Q\to \Gf(F)\]
is defined over $F$. Let $G_\lambda$ be the Zariski closure of the image of $\rho_\lambda$ in ${\Gf}_{/F}$, let $G_\lambda^\circ$ be its identity connected component and let $\g_\lambda$ be its Lie algebra. In this subsection, we determine the Lie algebra $\g_\lambda$ and the group $G_\lambda$ when $\rho_\lambda$ is irreducible.

\begin{proposition}\label{thm:lie-alg}
	Assume that the weight of $\pi$ is not of the form $(2k-1, k+1)$ for some $k\ge 2$. If $\rho_\lambda$ is irreducible, then $\g_\lambda\simeq \gsp_4(F)$.
\end{proposition}

\begin{remark}
	If the weight of $\pi$ is of the form $(2k-1, k+1)$ for some $k\ge 2$---so, in particular, $\pi$ is cohomological---then we cannot rule out the case that $\g_\lambda\simeq\Sym^3\gl_2(F)$ in complete generality. However, when $\lambda\mid\l$ with $\l\ge 5$, then one can rule out this case by combining \cite{conti2016galois}*{Thm.\ 3.8} with \cite{pan}*{Thm.\ 1.0.4}.
\end{remark}

\begin{corollary}\label{cor:image-connected}
	Assume that the weight of $\pi$ is not of the form $(2k-1, k+1)$ for some $k\ge 2$. If $\rho_\lambda$ is irreducible, then $G_\lambda\simeq \Gf(F)$.
\end{corollary}

\begin{proof}
	 $G_\lambda$ is a Zariski closed subgroup of $\Gf(F)$. Moreover, $\Lie(G_\lambda)=\Lie(\Gf(F))$, so $G_\lambda$ is also a Zariski open subgroup of $\Gf(F)$. Since $\Gf(F)$ is Zariski-connected, it follows that $G_\lambda =\Gf(F)$.
\end{proof}

\begin{definition}
	Let $G$ be a group, and $k$ be a field. We say that a representation
	$$\rho\:G\to\GL_n(k)$$
	is \emph{Lie irreducible} if $\restr{\rho}H$ is irreducible for all finite index subgroups $H\le G$.
\end{definition}

\begin{definition}
	Let $G$ be a group, and $k$ be a field. We say that a representation
	$$\rho\:G\to\GL_n(k)$$
	is \emph{imprimitive} if it is absolutely irreducible, but there is a finite index subgroup $H$ of $G$ and a $\overline k$-representation $\tau$ of $H$ such that $\rho\tensor\overline k\simeq \Ind_{H}^{G}\tau$. Otherwise, we say that $\rho$ is \emph{primitive}.
\end{definition}

\begin{lemma}\label{lem:lie-irred}
	If $\rho_\lambda$ is irreducible, then it is Lie irreducible.
\end{lemma}

\begin{proof}
	First suppose that $\rho_\lambda \simeq \Ind_K^\Q\tau$ is imprimitive. We first note that we may assume that $K/\Q$ is quadratic. If not, then, by counting dimensions, $[K:\Q] = 4$. If $K$ contains a quadratic subextension $K'$, then $\rho_\lambda= \Ind_{{K'}}^{\Q}\br{\Ind_{{K'}}^{K}\tau}$. If $K$ does not contain a quadratic subfield, the proof of \cite[Lem.\ 5.3]{Gan} shows that $\rho_\lambda$ is induced from a different quadratic extension. It follows from Clifford theory that \[\rho_\lambda\simeq\rho_\lambda\tensor\chi_{K/\Q},\]
	where $\chi_{K/\Q}$ is the quadratic character that cuts out the extension $K/\Q$. Let $\Pi$ be the lift of $\pi$ to $\GL_4$ and recall that, by assumption, $\Pi$ is cuspidal and not an automorphic induction. By class field theory, we may view $\chi_{K/\Q}$ as a Hecke character, and the above isomorphism implies that $\Pi$ and $\Pi\tensor\chi_{K/\Q}$ are weakly equivalent. Hence, by the strong multiplicity one theorem for $\GL_4$,
	\[\Pi\simeq\Pi\tensor\chi_{K/\Q}.\]
	By \cite[Thm.\ 4.2]{ArthurClozel}, this isomorphism is equivalent to $\Pi$ being an automorphic induction, a contradiction.
	
	Hence, we may assume that $\rho_\lambda$ is imprimitive. By \cite{patrikis}*{Prop.\ 3.4.1}, we can write
	\[\rho_\lambda\simeq\tau\tensor\omega,\]
	where $\tau$ is a Lie irreducible representation of dimension $d$ with $d\mid 4$, and $\omega$ is an Artin representation of dimension $\frac 4d$. The fact that the Hodge--Tate--Sen weights of $\rho_\lambda$ are not all equal shows that $\rho_\lambda$ is not a twist of an Artin representation. Hence, $d\ne 1$. Suppose that $d=2$. If $\omega$ is imprimitive---say $\omega \simeq \Ind_K^\Q\chi$ for some quadratic extension $K/\Q$ and character $\chi$ of $\Gal(\Qb/K)$---then
	\[\rho_\lambda\simeq\Ind_K^\Q(\tau|_K\tensor\chi)\]
	is also imprimitve, a contradiction. Hence, we may assume that both $\tau$ and $\omega$ are primitive. It follows that $\Sym^2\tau$ and $\Sym^2\omega$ are both irreducible. Taking exterior squares, we find that
	$$\ext\rho_\lambda\simeq\ext(\tau\tensor\omega)\simeq\br{\ext\tau\tensor\Sym^2\omega}\+\br{\ext\omega\tensor\Sym^2\tau}$$
	does not contain a one-dimensional subrepresentation, contradicting the fact that $\rho_{\pi,\l}$ is symplectic.
	
	The only remaining possibility is that $d = 4$, i.e.\ that $\rho_\lambda$ is Lie irreducible.
\end{proof}

\begin{proof}[Proof of \Cref{thm:lie-alg}]
	Let $G_\lambda'$ be the commutator subgroup of $G_\lambda$ and let $\g_\lambda'$ be its Lie algebra. Since $\rho_\lambda$ is a semisimple representation, it follows that $G_\lambda$ is a reductive group, that $G_\lambda'$ is semisimple and, hence, that $\g_\lambda'$ is a semisimple Lie subalgebra of $\sp_4(F)$. Moreover, since the similitude of $\rho_\lambda$ does not have finite image, we have $\g_\lambda\simeq\g_\lambda'\+\gl_1$. 
	
	Fix an embedding $F\hookrightarrow\Qlb$. Then, by the classification of semisimple algebras, the Lie algebra $\g_\lambda'\tensor_F\Qlb$ is one of the following Lie algebras  \cite[9.3.1]{Hida}:
	
	\begin{enumerate}
		\item $\sp_4(\Qlb)$;
		\item $\sl_2(\Qlb)\times\sl_2(\Qlb)$;
		\item $\sl_2(\Qlb)$ embedded in a Klingen parabolic subalgebra;
		\item $\sl_2(\Qlb)$ embedded in a Siegel parabolic subalgebra;
		\item $\sl_2(\Qlb)$ embedded via the symmetric cube representation $\SL_2\to\Sp_4$;
		\item $\{1\}$.
	\end{enumerate}
	
	Let $d\rho_\lambda\:\g_\lambda\to \gsp_4(F)$ be the Lie algebra representation associated to the map $G_\lambda\hookrightarrow \Gf(F)$ 
	
	We need to show that $\g_\lambda'\simeq\sp_4(F)$. Since $\rho_\lambda$ is irreducible, by \Cref{lem:lie-irred}, $\rho_\lambda$ is Lie irreducible, which exactly says that $d\rho_\lambda$ is irreducible. Moreover, since $\g_\lambda$ is semisimple, we can write $\g = \g'\+\a$, where $\a$ is abelian. It follows that $d\rho_\lambda$ is irreducible if and only its restriction ot $\g_\lambda'$ is. Thus, $\g_\lambda'\tensor_F\Qlb$ cannot be as in cases $(ii)$, $(iii)$, $(iv)$ or $(vi)$. 
	
	Suppose that $\g_\lambda'\tensor_F\Qlb$ is as in case $(v)$, i.e.\ that $\g_\lambda'\tensor_F\Qlb\simeq\Sym^3\sl_2(\Qlb)$. Then $(G_\lambda^\circ)'\times_F\Qlb\simeq \Sym^3\SL_2(\Qlb)$. Since the similitude of $\rho_\lambda$ does not have finite image, it follows that $G^\circ\times_F\Qlb\simeq\Sym^3\GL_2(\Qlb)$. There is a finite Galois extension $K/\Q$ such that $\rho_\lambda(\Gal(\Qb/K))\sub G^\circ$. It follows that $\rho_\lambda|_K\simeq\Sym^3\tau$ for some two-dimensional representation $\tau$. If the Hodge--Tate--Sen weights of $\tau$ at any place $v\mid\infty$ are $\{a, b\}$, then the Hodge--Tate--Sen weights of $\rho_\lambda|_K$ at $v$ are $\{3a, 2a+b, a+2b, 3b\}$, which contradicts the assumption on the weights of $\pi$. 
	
	It follows that $\g_\lambda'\tensor_F\Qlb\simeq \sp_4(\Qlb)$. Now, $\g_\lambda'$ is a vector subspace of $\sp_4(F)$. The fact that $\g_\lambda'\tensor_F\Qlb\simeq\sp_4(F)\tensor_F\Qlb$ shows that $\g_\lambda'$ and $\sp_4(F)$ have the same dimension, and hence are equal.
\end{proof}

\subsection{Distinctness of Satake parameters}

We recall the following theorem, due to Rajan:

\begin{theorem}[\cite{Rajan}*{Thm.\ 3}]\label{thm:rajan}
	Let $F$ be a finite extension of $\Ql$ and let $\sG$ be an algebraic group defined over $F$. Let $X$ be an algebraic subscheme of $\sG$, defined over $F$, that is stable under the adjoint action of $\sG$. Let
	\[\rho\:\Ga \Q\to \sG(F)\]
	be a Galois representation, and let
	\[C = X(F)\cap \rho(\Ga \Q).\]
	Let $G$ denote the Zariski closure of $\rho(\Ga \Q)$ in $\sG_{/F}$, with identity connected component $G^\circ$ and component group $\Phi = G/G^\circ$. For each $\phi\in\Phi$, let $G^\phi$ denote the corresponding connected component, and let
	\[\Psi = \set{\phi\in\Phi: G^\phi\sub X}.\]
	Then the set of primes $p$ such that $\rho(\Frob_p)\in C$ has Dirichlet density $\frac{|\Psi|}{|\Phi|}$.
\end{theorem}

\begin{corollary}\label{cor:irred-distinct}
	Suppose that $\rho_\lambda$ is irreducible and not a symmetric cube lift. Then the roots of the $p$-th Hecke polynomial of $\pi$ are pairwise distinct for a set of primes $p$ of Dirichlet density $1$.
\end{corollary}

\begin{proof}
	Up to multiplication by a normalisation factor, if $\rho_\lambda$ is unramified at $p$, then the roots of the $p$-th Hecke polynomial are exactly the eigenvalues of $\rho_\lambda(\Frob_p)$. 
	
	Let $X\sub\Gf$ be the set of elements $g\in \Gf$ whose characteristic polynomials have indistinct eigenvalues. Then $X$ is a closed subscheme of $\Gf$---it is the vanishing set of the discriminant of the characteristic polynomial---that is stable under the conjugation action of $\Gf$. By \Cref{cor:image-connected}, the Zariski closure of $\rho_\lambda$ is $\Gf(F)$, which is connected, and clearly is not contained in $X$. The result follows from \Cref{thm:rajan}.
\end{proof}

\begin{lemma}\label{lem:sym-cube}
	Suppose that $\rho_\lambda\simeq\Sym^3\tau$ is a symmetric cube lift from an irreducible two-dimensional representation $\tau$. Then the roots of the $p$-th Hecke polynomial of $\pi$ are pairwise distinct for a set of primes $p$ of Dirichlet density $1$.
\end{lemma}

\begin{proof}
	By \Cref{lem:lie-irred}, $\rho_\lambda$ is Lie irreducible, and hence $\tau$ is as well. Arguing as in \Cref{thm:lie-alg} (or as in \cite{Ribet85}), we see that the Zariski closure of the image of $\tau$ is $\GL_2(F)$. Thus, as in \Cref{cor:irred-distinct}, it follows that the eigenvalues $\alpha_p, \beta_p$ of $\tau(\Frob_p)$ are distinct for a set of primes $p$ of Dirichlet density $1$. Moreover, by \cite{patrikis}*{Prop.\ 3.4.9}, $\alpha_p\ne\beta_p$ for a set of primes $p$ of Dirichlet density $1$. Lastly, if $\alpha_p=\zeta_3\beta_p$, where $\zeta_3$ is a cube root of unity, then $\Tr\Sym^2\tau(\Frob_p) = 0$. Thus, if $\alpha_p=\zeta_3\beta_p$ for a positive density of primes $p$, it follows from \cite{patrikis}*{Prop.\ 3.4.9} that $\Sym^2\tau$ is irreducible, but not Lie irreducible. But if $\Sym^2\tau|_K$ is reducible, then $\tau|_K$ is induced from a quadratic extension $L/K$, so $\tau|_L$ is reducible. Hence, $\rho_\lambda|_L$ is reducible, contradicting \Cref{lem:lie-irred}.
	
	It follows that the eigenvalues $\alpha_p^3, \alpha_p^2\beta_p, \alpha_p\beta_p^2, \beta_p^3$ of $\rho_\lambda(\Frob_p)$ are distinct for a set of primes $p$ of Dirichlet density $1$.
\end{proof}

\begin{lemma}\label{lem:one-lie-irred}
	Suppose that $\rho_\lambda\simeq\tau_1\+\tau_2$, with $\tau_1, \tau_2$ irreducible and two-dimensional. Then, up to reordering, $\tau_1$ is Lie irreducible. 
\end{lemma}

\begin{proof}
	Suppose that $\tau_1$ and $\tau_2$ are both irreducible, but not Lie irreducible. Then, for each $i$, there exists a quadratic extension $K_i/\Q$ such that $\tau_i|_{K_i}$ is reducible. Let $K = K_1\cdot K_2$. Then $K$ is a solvable extension of $\Q$. Recall that there is a cuspidal automorphic representation $\Pi$ of $\GL_4(\AQ)$ associated to $\pi$. By \cite{ArthurClozel} and the assumption that $\Pi$ is not an automorphic induction, the solvable base change $\Pi_K$ of $\Pi$ is cuspidal. On the other hand, $\rho_\lambda|_K$ is a direct sum of four Hodge--Tate characters, $\chi_1\+\chi_2\+\chi_3\+\chi_4$. By class field theory, we may view the characters $\chi_i$ as Hecke characters. By the strong multiplicity one theorem for $\GL_4$, it follows that $\Pi_K$ is isomorphic to the isobaric sum of four Hecke characters, contradicting the fact that $\Pi_K$ is cuspidal.
\end{proof}

\begin{proof}[Proof of \Cref{thm:distinctness}]
	After \Cref{thm:partial-irred}, \Cref{cor:irred-distinct} and \Cref{lem:sym-cube}, it remains to consider the case that $\rho_\lambda\simeq\tau_1\+\tau_2$, with $\tau_1, \tau_2$ distinct, irreducible two-dimensional Hodge--Tate representations, each with Hodge--Tate weights $\{0,k-1\}$ and with determinant $\simil\rho_\lambda$.
	
	For each $i=1, 2$ and for each prime $p$ at which $\rho_{\lambda}$ is unramified, let $\alpha_{p, i}, \beta_{p,i}$ be the roots of the characteristic polynomial of $\tau_i(\Frob_p)$. By \Cref{lem:one-lie-irred}, we may assume that $\tau_1$ is Lie irreducible. Hence, as in the proof of \Cref{lem:sym-cube}, we see that $\alpha_{p,1}\ne \beta_{p,1}$ for a set of primes $p$ of Dirichlet density $1$. 
	
	If $\tau_2$ is Lie irreducible, then the same argument shows that $\alpha_{p,1}\ne \beta_{p,2}$ for a set of primes $p$ of Dirichlet density $1$. Otherwise, since $\tau_2$ is not an Artin representation, it follows that there is a quadratic extension $K/\Q$ and a character $\chi$ of $\Gal(\Qb/K)$ such that
	\[\tau_2 \simeq\Ind_K^\Q(\chi).\]
	If $p$ is a prime that is inert in $K$, then $\Tr\tau_2(\Frob_p)= \alpha_{p,1}+ \beta_{p,2}= 0$, from which it follows that $\alpha_{p,1}\ne \beta_{p,2}$. If $p$ splits as $vv^c$ in $K$, then the eigenvalues of $\tau_2(\Frob_p)$ are $\chi(\Frob_v)$ and $\chi(\Frob_{v^c}) = \chi^c(\Frob_v)$, where $c$ is the non-trivial element of $\Gal(K/\Q)$. Note that $\chi\not\simeq\chi^c$, since $\tau_2$ is irreducible. Moreover, since $\tau_2$ is Hodge--Tate regular, it follows that $K/\Q$ is imaginary quadratic, that $\chi$ and $\chi^c$ have infinite image and that they have different Hodge--Tate weights at the two complex places. If $\chi(\Frob_v) =\chi^c(\Frob_v)$ for a positive density of primes $p$, then, by \cite{Rajan}*{Thm.\ 2}, there exists a finite order character $\omega$ such that $\chi\simeq\chi^c\tensor\omega$, which contradicts the fact that $\chi$ and $\chi^c$ have distinct Hodge--Tate weights.
	
	It follows that for a set of primes $p$ of Dirichlet density $1$, we have $\alpha_{p, i}\ne \beta_{p, i}$ for each $i = 1,2$.  If the eigenvalues of $\rho_\lambda(\Frob_p)$ are indistinct for a positive density of primes $p$, relabelling, we may assume that $\alpha_{p,1} = \alpha_{p,2}$. Since $\det\tau_1\simeq\det\tau_2$, it follows that $\beta_{p,1} = \beta_{p,2}$ as well. Hence, for a set of primes $p$ of positive density, we have $\Tr\tau_1(\Frob_p) = \Tr\tau_2(\Frob_p)$. By \cite{Rajan}*{Thm.\ 2}, there exists a finite order character $\chi$ such that $\tau_1\simeq\tau_2\tensor\chi$, contradicting \Cref{lem:no-finite}.
\end{proof}

\section{Proof of Theorem $\ref{mainthm}$}\label{full-irred}

\begin{proof}[Proof of  Theorem $\ref{mainthm}$]
	After \Cref{cor:crystalline}, it remains to show that, if $\lambda\mid\l$ with $\l\ge 5$ and if $\rho_\lambda|_{\Ql}$ is de Rham, then $\rho_\lambda$ is irreducible.
	
	Suppose that $\rho_\lambda$ is reducible. Then, by \Cref{thm:partial-irred}, we can write $\rho_\lambda\simeq\tau_1\+\tau_2$ where the representations $\tau_1, \tau_2$ are distinct, two-dimensional, irreducible, Hodge--Tate regular and odd. Since $\rho_\lambda|_{\Ql}$ is de Rham, so are $\tau_1|_{\Ql}$ and $\tau_2|_{\Ql}$. Thus, since $\l\ge 5$, by \cite{pan}*{Thm.\ 1.0.4}, there exist distinct cuspidal automoprhic representations $\pi_1, \pi_2$ of $\GL_2(\AQ)$ such that, for each $i$, $\tau_i$ is the $\lambda$-adic Galois representation associated to $\pi_i$.
	
	Consider the representation
	\[\rho_\lambda\tensor\rho_\lambda\dual\simeq (\tau_1\tensor\tau_1\dual)\+(\tau_1\tensor\tau_2\dual)\+(\tau_2\tensor\tau_1\dual)\+(\tau_2\tensor\tau_2\dual).\]
	Since $\rho_\lambda$, $\tau_1$ and $\tau_2$ are automorphic, we obtain an equality of (partial) $L$-functions
	\[L^*(\pi\tensor\pi\dual,s) = L^*(\pi_1\tensor \pi_1\dual, s)L^*(\pi_1\tensor \pi_2\dual, s)L^*(\pi_2\tensor \pi_1\dual, s)L^*(\pi_2\tensor \pi_2\dual, s).\]
	Since the transfer of $\pi$ to $\GL_4$ is cuspidal, by \cite{jacquet1981euler}*{Prop.\ 3.6}, the left hand side has a simple pole at $s=1$. Similarly, for each $i = 1,2$, $\ord_{s=1}L^*(\pi_i\tensor\pi_i\dual,s) = -1$.
	However, by \cite{jacquet1981euler}*{Thm.\ 3.7}, the $L$-functions $L^*(\pi_1\tensor \pi_2\dual, s)$ and $L^*(\pi_2\tensor \pi_1\dual, s)$ are non-zero at $s = 1$. It follows that the right hand side has a pole of order $2$ at $s = 1$, a contradiction. Hence, $\rho_\lambda$ is irreducible.
\end{proof}

\section{Residual irreducibility and the image of Galois}\label{residual-irred-section}

In this section, we prove \Cref{residual-irred-intro}. Our arguments generalise those of \cites{Dieulefait2002maximalimages, Dieulefait-endoscopy, DZ} to the case that $\pi$ is non-cohomological. Moreover, \Cref{lem:no-even} allows us to strengthen the results of \cite{DZ} even in the cohomological case (see \cite[Rmk.\ 3.4]{DZ}). Although the results in this section apply to automorphic representations of arbitrary weight, for ease of notation, we will assume that $\pi$ has non-cohomological weight $(k,2)$, for some integer $k\ge 2$.

Our key tool is following proposition, which is a simple consequence of Fontaine--Laffaille theory. Recall that $\LL$ is the set of primes $\lambda$ for which $\rho_\lambda|_{\Ql}$ is crystalline. By \Cref{mainthm}, $\LL$ contains all primes $\lambda\mid\l$ for a set of rational primes $\l$ of Dirichlet density $1$.

\begin{proposition}\label{inertia-types}
	Suppose that $\lambda\in\LL$ has residue characteristic $\l>k$. Then we have the following possibilities for the action of the inertia group $I_\l$ at $\l$ $($c.f.\ \cite{Dieulefait2002explicit}$)$:
	
	\[
	\begin{pmatrix}
		1 & * & * & *\\
		0 & \overline\epsilon_\l^{k-1} & * & *\\
		0&0&1&*\\
		0&0&0&\overline\epsilon_\l^{k-1}
	\end{pmatrix},
	\begin{pmatrix}
		\psi_2^{k-1} & 0 & * & *\\
		0 & \psi_2^{\l(k-1)} & * & *\\
		0&0&1&*\\
		0&0&0&\overline\epsilon_\l^{k-1}
	\end{pmatrix},
	\]\[
	\begin{pmatrix}
		\psi_2^{k-1} & 0 & * & *\\
		0 & \psi_2^{\l(k-1)} & * & *\\
		0&0&\psi_2^{\l(k-1)}&0\\
		0&0&0&\psi_2^{k-1}
	\end{pmatrix},
	\begin{pmatrix}
		\psi_4^{(\l+\l^2)(k-1)} & 0 & 0 & 0\\
		0 & \psi_4^{(\l^2+\l^3)(k-1)} & 0 & 0\\
		0&0&\psi_4^{(\l^3+1)(k-1)}&0\\
		0&0&0&\psi_4^{(1+\l)(k-1)}
	\end{pmatrix},
	\]
	where $\overline\epsilon_\l$ is the mod $\l$ cyclotomic character, and $\psi_i$ is the fundamental character of level $i$.
\end{proposition}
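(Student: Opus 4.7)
The plan is to combine Fontaine--Laffaille theory with the symplectic self-duality of $\rho_{\pi,\l}$. Since $\l > k$, we have $k-1 \le \l-2$, so the Hodge--Tate weights $\{0,0,k-1,k-1\}$ of $\rho_{\pi,\l}$ lie in the Fontaine--Laffaille range. Applying the standard classification of tame inertia weights for crystalline representations in this range, each irreducible $I_\l$-summand of $\orho_{\pi,\l}|_{I_\l}^{\mathrm{ss}}$ of dimension $n$ has the form $\psi_n^a \+ \psi_n^{\l a} \+ \cdots \+ \psi_n^{\l^{n-1}a}$, where $a = w_1 + \l w_2 + \cdots + \l^{n-1} w_n$ for some ordered $n$-tuple drawn from the multiset $\{0, 0, k-1, k-1\}$ of Hodge--Tate weights.

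Next, I would enumerate all partitions of this multiset into Galois orbits under $w \mapsto \l w$. The a priori possibilities are: (a) four level-$1$ characters; (b) one level-$2$ orbit together with two level-$1$ characters; (c) two level-$2$ orbits; (d) one level-$4$ orbit; and (e) one level-$3$ orbit together with one level-$1$ character. For each case I would compute the exponents modulo $\l^n - 1$, verifying in particular that the ``level-$2$'' orbits arising from $(w_1, w_2) \in \{(0,0), (k-1, k-1)\}$ collapse to the level-$1$ characters $1$ and $\chi_\l^{k-1}$, and that the permutation $(0, k-1, 0, k-1)$ of weights produces a pair of level-$2$ orbits rather than a genuine level-$4$ one. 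This pins down the exponent $a = k-1$ for any genuine level-$2$ orbit and $a = (k-1)(\l+\l^2)$ for the unique genuine level-$4$ orbit, producing precisely the four matrices displayed (up to the off-diagonal entries).

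The key remaining step is to exclude case (e) using the symplectic constraint. Because $\rho_{\pi,\l}\dual \simeq \rho_{\pi,\l} \tensor \simil(\rho_{\pi,\l})$ with $\restr{\simil(\rho_{\pi,\l})}{I_\l} = \chi_\l^{k-1}$ (for $\l$ outside the finite set of primes where $\epsilon$ is ramified, which we may absorb into the hypothesis), the multiset of tame inertia characters is closed under the involution $\chi \mapsto \chi_\l^{k-1}\chi^{-1}$. Writing $\chi_\l = \psi_3^{1+\l+\l^2}$, one checks that for either of the two HT-admissible exponents $a = (k-1)\l^2$ and $a = (k-1)(\l+\l^2)$, the involution fails to preserve the putative level-$3$ orbit unless $\l-1$ divides $k-1$, which is incompatible with $1 \le k-1 < \l-1$. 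Finally, the upper-triangular shape of the matrices reflects the fact that Fontaine--Laffaille modules may admit non-split extensions between filtration pieces of weight $0$ and weight $k-1$; the specific ordering of the diagonal entries displayed is a standard normalisation.

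The main technical obstacle I anticipate is the bookkeeping in the orbit enumeration: carefully distinguishing genuine level-$n$ orbits from those that collapse to lower level under multiplication by $\l$, and matching the resulting exponents (modulo $\l^n - 1$) to the precise forms displayed in the proposition. The symplectic exclusion of case (e), while requiring careful manipulation of fundamental character identities, is essentially an explicit congruence check once set up correctly.
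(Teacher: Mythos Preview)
The paper does not give its own proof of this proposition; it simply records the statement with a citation to \cite{Dieulefait2002explicit} for the analogous computation in the cohomological (regular) case. Your proposal carries out precisely the standard Fontaine--Laffaille argument that this citation points to, correctly adapted to the present setting with repeated Hodge--Tate weights $\{0,0,k-1,k-1\}$: the enumeration of orbit partitions of this multiset is complete, the collapse checks (e.g.\ that the ordering $(0,k-1,0,k-1)$ gives a level-$2$ rather than a genuine level-$4$ orbit) are right, and the symplectic involution $\chi\mapsto\chi_\l^{k-1}\chi^{-1}$ on the tame characters does exclude the $3+1$ case as you describe.

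One minor remark: your caveat about needing $\l$ outside the ramification locus of $\epsilon$ is not an extra hypothesis to be absorbed. The assumption $\l\in\LL$ already guarantees it, because $\simil(\rho_{\pi,\l})$ occurs in $\Lambda^2\rho_{\pi,\l}$ and is therefore crystalline, and a crystalline character of Hodge--Tate weight $0$ is unramified; hence $\restr{\epsilon}{I_\l}$ is trivial and $\restr{\simil(\orho_{\pi,\l})}{I_\l}=\chi_\l^{k-1}$ on the nose. (The paper uses exactly this fact later in the section.)
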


\begin{lemma}\label{lem:no-resid-one-dim}
	Suppose that, for infinitely many $\lambda\in\LL$, $\orho_\lambda$ contains a subrepresentation $\overline\tau_\lambda$. Then for all but finitely many such $\lambda$, $\overline{\tau}_\lambda$ is irreducible and two-dimensional, and $\det\overline\tau_\lambda|_{I_\l} = \overline\epsilon_\l^{k-1}$.
\end{lemma}

\begin{proof}
	First suppose that, for infinitely many $\lambda\in\LL$, $\overline\tau_\lambda$ is one-dimensional. By \Cref{inertia-types}, it follows that, for almost all such $\lambda$, $\overline\tau_\lambda\simeq\overline\chi_\lambda\overline\epsilon_\l^{n_\lambda}$, where $\overline\chi_\lambda$ is unramified at $\l$ and $n_\lambda = 0$ or $k-1$. By \Cref{bounded-conductor}, the conductor of $\overline\chi_\lambda$ is bounded independently of $\lambda$. Since there are only finitely many characters of bounded conductor, there exists a character $\chi\:\Ga\Q\to\Qb\t$ such that, for infinitely many $\lambda$, $\overline\tau_\lambda = \chi\epsilon_\l^{n}\pmod\lambda$, where $n = 0$ or $k-1$ is independent of $\lambda$. 
	
	Fix a prime $p$ at which $\pi$ is unramified. Then, for infinitely many $\lambda$, $\rho_\lambda(\Frob_p)$ has an eigenvalue $\alpha_p$ such that $\alpha_p \equiv \chi\epsilon_\l^n(\Frob_p) = \chi(p)p^n\pmod \lambda$. It follows that $\alpha_p = \chi(p)p^n$. Since $n\ne \frac{k-1}2$, this contradicts the Jacquet--Shalika bounds \cite{jacquetshalika1}*{Cor.\ 2.5}, as in \Cref{thm:mock-ramanujan}.
	
	Hence, if $\orho_\lambda$ contains a subrepresentation $\overline\tau_\lambda$ for infinitely many $\lambda\in\LL$, then $\overline\tau_\lambda$ must be two-dimensional for all but finitely many such $\lambda$. If $\det\overline\tau_\lambda|_{I_\l} \ne\overline\epsilon_\l^{k-1}$, then, by \Cref{inertia-types}, we must have $\det\overline\tau_\lambda =\overline\chi_\lambda\overline\epsilon_\l^{n_\lambda}$, where $\overline\chi_\lambda$ is unramified at $\l$ and $n_\lambda = 0$ or $2(k-1)$. As above, it follows that for a fixed prime $p$, $\rho_\lambda(\Frob_p)$ has two eigenvalues $\alpha_p, \beta_p$ that satisfy $|\alpha_p\beta_p| = p^n$ with $n = 0$ or $2(k-1)$, which contradicts the Jacquet--Shalika bounds.
\end{proof}

\begin{lemma}\label{lem:no-even}
	Suppose that, for infinitely many $\lambda\in\LL$, $\orho_\lambda\simeq\overline\tau_{\lambda}\+\overline\tau_\lambda'$, with $\overline\tau_{\lambda}, \overline\tau_{\lambda}'$ irreducible and two-dimensional.  Then for all but finitely many such $\lambda$, $\overline\tau_{\lambda}, \overline\tau_{\lambda}'$ are odd.
\end{lemma}

\begin{proof}
	Write $\overline\chi_\lambda = \simil\orho_\lambda\tensor\det\overline\tau_\lambda\ii$ and suppose that $\det\overline\tau_\lambda$ is even. Since $\simil\orho_\lambda$ is odd, it follows that $\overline\chi_\lambda$ is non-trivial.  Moreover, since $\det\orho_\lambda \simeq (\simil\orho_\lambda)^2\simeq\det\overline\tau_\lambda\det\overline\tau_\lambda'$, it follows that $\overline\chi_\lambda= \det\overline\tau_\lambda'{}\simil\orho_\lambda\ii$. 
	
	Now, by \Cref{lem:no-resid-one-dim}, we may assume that $\overline\chi_\lambda$ is unramified at $\l$ and, by \Cref{bounded-conductor}, the conductor of $\overline\chi_\lambda$ is bounded independently of $\lambda$. Hence, there exists a Dirichlet character $\chi$ such that $\overline\chi_\lambda = \chi\pmod\lambda$ for infinitely many $\lambda$.
	
	By duality, we see that
	\[\overline\tau_{\lambda}\+\overline\tau_\lambda'\simeq \orho_\lambda\simeq\orho_\lambda\dual\tensor\simil\orho_\lambda\simeq (\overline\tau_{\lambda}\dual\tensor\simil\orho_\lambda)\+(\overline\tau_\lambda'{}\dual\tensor\simil\orho_\lambda) \simeq (\overline\tau_\lambda\tensor\overline\chi_\lambda)\+(\overline\tau_\lambda'\tensor\overline\chi_\lambda\ii).\]
	By Schur's lemma, it follows that either $\overline\tau_\lambda\simeq \overline\tau_\lambda\tensor\overline\chi_\lambda$ and $\overline\tau_\lambda'\simeq \overline\tau_\lambda'\tensor\overline\chi_\lambda$ or $\overline\tau_\lambda'\simeq \overline\tau_\lambda\tensor\overline\chi_\lambda$.
	
	In the first case, $\orho_\lambda\tensor\overline\chi_\lambda\simeq\orho_\lambda$. If this case occurs for infinitely many $\lambda$, then $\rho_\lambda\simeq\rho_\lambda\tensor\chi$. Thus, $\rho_\lambda$ is not Lie irreducible, contradicting \Cref{lem:lie-irred}.
	
	In the second case, $\orho_\lambda|_{\ker\chi}\simeq \overline{\tau}_\lambda|_{\ker\chi}^{\+2}$. Thus, if $p$ splits in the number field $\Qb^{\ker\chi}$, then the eigenvalues of $\orho_\lambda(\Frob_p)$ are indistinct. If this case occurs for infinitely many $\lambda$, then for the positive density of primes $p$ that split in $\Qb^{\ker\chi}$, the $p$-th Hecke polynomial has indistinct roots modulo $\lambda$ for infinitely many $\lambda$. Hence, for a positive density of primes $p$, the $p$-th Hecke polynomial has indistinct roots in $\Qb$, contradicting \Cref{thm:distinctness}.
\end{proof}

\begin{proof}[Proof of \Cref{residual-irred-intro}]
	We first show that $\orho_\lambda$ is irreducible for all but finitely many $\l\in\LL$. After the previous lemmas, it remains to consider the case that, for infinitely many $\l\in\LL$, $\orho_\lambda\simeq\overline\tau_{\lambda}\+\overline\tau_\lambda'$, with $\overline\tau_{\lambda}, \overline\tau_{\lambda}'$ irreducible, two-dimensional and odd. In this case, we can argue as in \cite{Dieulefait2002maximalimages} and apply Serre's conjecture \cite{khare2009serre}.
	
	By \Cref{bounded-conductor}, the conductors of $\overline\tau_{\lambda}, \overline\tau_{\lambda}'$ are bounded independently of $\lambda$, and by \Cref{lem:no-resid-one-dim}, they each have Serre weight $k$. Thus, there is an integer $N$, independent of $\lambda$, and modular forms $f_\lambda, f_\lambda'\in S_k(\Gamma_1(N))$ with associated residual representations $\overline\tau_{\lambda}, \overline\tau_{\lambda}'$. 
	
	If this case occurs for infinitely any $\lambda$, then, since $S_k(\Gamma_1(N))$ is finite dimensional, there exist fixed modular forms $f, f'\in S_k(\Gamma_1(N))$ such that $f = f_\lambda$ and $f'=f'_\lambda$ for infinitely many $\lambda$. Thus, for infinitely many $\lambda$ and for almost all $p$, $\Tr\rho_\lambda(\Frob_p)\equiv a_f(p) + a_{f'}(p)\pmod\lambda$ and, since this congruence holds for infinitely many $\lambda$, it must be an equality. By the Chebotarev density theorem and the fact that a semisimple representation in charactersitic $0$ is determined by its trace, it follows that $\rho_\lambda$ is reducible for all $\lambda$, contradicting \Cref{mainthm}. Hence, $\orho_\lambda$ is irreducible for all but finitely many $\lambda\in\LL$.
	
	The remainder of the proof of \Cref{residual-irred-intro} is exactly the same as for the cohomological case \cite[3.2-3.5]{DZ}. By the classification of the maximal subgroups of $\GSp_4(\F_{\l^n})$ \cite{mitchell1914subgroups}, if $\orho_\lambda$ is irreducible and does not contain $\Sp_4(\Fl)$, then one of the following cases must hold:
	\begin{enumerate}
		\item The image contains a reducible index two subgroup, i.e.\ $\orho_\lambda$ is induced from a quadratic extension;
		\item $\orho_\lambda$ is isomorphic to the symmetric cube of a two-dimensional representation;
		\item The image is a small exceptional group.
	\end{enumerate}
	
	By using the description of the image of inertia as in \Cref{inertia-types}, Dieulefait--Zenteno show that each of these cases can only occur for finitely many $\l\in\LL$.
\end{proof}

\section*{Acknowledgements}

I would like to thank my Ph.D. supervisor Tobias Berger for suggesting this problem, and for his constant help, guidance and support throughout my Ph.D. I am also grateful to Luis Dieulefait for a helpful discussion, which helped to shape the direction of the paper, and to Neil Dummigan and Toby Gee for their valuable feedback on my Ph.D. thesis. Thanks also to Adel Betina, Andrea Conti, Frazer Jarvis, Jayanta Manoharmayum, Mohamed Moakher, Vincent Pilloni, Ciaran Schembri, Haluk \c{S}eng\"un, Beno\^it Stroh and Jacques Tilouine for helpful conversations, correspondences and feedback. Lastly, I would like to thank the anonymous referee for the helpful (and timely) comments and corrections they made for the final version of this paper.

\bibliography{bibliography}
\bibliographystyle{alpha}

\end{document}